\providecommand{\U}[1]{\protect\rule{.1in}{.1in}}
\newtheorem{theorem}{Theorem}
\newtheorem{corollary}[theorem]{Corollary}
\theoremstyle{example}
\newtheorem{example}[theorem]{Example}
\newtheorem{lemma}[theorem]{Lemma}
\theoremstyle{remark}
\newtheorem{remark}[theorem]{Remark}
\begin{document}
\title[Multiplication operators]{Multiplication operators on vector-valued function spaces }
\author{H\"{u}lya Duru}
\address{Istanbul University, Faculty of Science, Mathematics Department, Vezneciler-Istanbul, 34134, Turkey}
\email{hduru@istanbul.edu.tr}
\author{Arkady Kitover}
\address{Mathematics Department,  Philadelphia Community College, Philadelphia, PA 19130}
\email{akitover@ccp.edu}
\author{Mehmet Orhon}
\address{Mathematics and Statistics Department, University of New Hampshire, Durham, NH 03824}
\email{mo@unh.edu}
\thanks{The first author was supported by the Scientific Projects Coordination Unit of Istanbul University, Project number 3952}
\date{April 5, 2011}
\subjclass[2000]{ Primary 47B38, Secondary 46G10, 46B42, 46H25.}
\keywords{Multiplication operator, K\"{o}the-Bochner space, vector-valued measurable function,
Banach function space, Banach lattice, ideal center, Banach $C(K)$-module }

\begin{abstract}
Let $E$ be a Banach function space on a probability measure space $(\Omega
,\Sigma,\mu).$ Let $X$ be a Banach space and $E(X)$ be the associated
K\"{o}the-Bochner space. An operator on $E(X)$ is called a multiplication
operator if it is given by multiplication by a function in $L^{\infty}(\mu).$
In the main result of this paper, we show that an operator $T$ on $E(X)$ is a
multiplication operator if and only if $T$ commutes with $L^{\infty}(\mu)$ and
leaves invariant the cyclic subspaces generated by the constant vector-valued
functions in $E(X).$ As a corollary we show that this is equivalent to $T$
satisfying a functional equation considered by Calabuig, Rodr\'{i}guez,
S\'{a}nchez-P\'{e}rez in [3].

\end{abstract}
\maketitle

\section{Introduction}

Our paper is motivated by the following question which continues to attract
considerable attention. Let $M$ be a linear space of (scalar or vector-valued)
functions on some set, let an algebra of scalar functions on the same set be
given, and let $T:M\rightarrow M$ be a linear operator. What properties of $T$
guarantee that it is an operator of multiplication by some function in the
given algebra of scalar functions? The answer often can be expressed in terms
of $T$-invariant linear subspaces of $M$. On the other hand, it is well known
(see \cite{AVK}) that a linear operator on a vector lattice can be
represented as a multiplication operator if and only if it is regular and band
preserving. If the algebra of such operators is rich enough (e.g., if $M$ is a
Banach lattice with a quasi-interior point) then we have another criterion: a
regular operator is a multiplication operator if and only if it commutes with
all regular band preserving operators. The first approach works for Banach
$C(K)$-modules as well. If $M$ is a Banach $C(K)$-module then a (bounded linear) 
operator $T$ on $M$ is induced by multiplication by a function from the
closure of $C(K)$ in the weak-operator topology on $\mathcal{L}(M)$ if and
only if $T$ leaves invariant every cyclic subspace $M(x)$ where $x\in M$ and
$M(x)=cl\{ax:\;a\in C(K)\}$ (\cite[Theorem 6.2]{AAK}, \cite[Theorem 7]{HO}). But
this condition is quite strong and not so easy to verify. Inspired by the
paper of Calabuig, Rodr\'{i}guez, and S\'{a}nchez-P\'{e}rez \cite{CRS} we looked at the
case when the operator leaves invariant only some special cyclic subspaces.
More precisely, we look at the case when the Banach $C(K)$-module (with
$C(K)=L^{\infty}(\mu)$) can be represented as a Banach space of vector-valued
measurable functions and the special cyclic subspaces are generated by the
constant vector-valued functions. The condition that an operator leaves such
subspaces invariant is in general not sufficient for being a multiplication
operator and we have to require additionally that the restriction of the
operator on such a subspace commutes with the restrictions of multiplication
operators to the same subspace (or more strongly, that the operator commutes
with the multiplication operators everywhere) . The techniques developed in
the paper allowed us to show that these two conditions together already become
sufficient to identify multiplication operators on a very broad class of
spaces of vector-valued measurable functions. Thus we considerably extend the
results in \cite{CRS}.

\section{Spaces of vector-valued continuous functions}

Let $K$ be a compact Hausdorff space and let $C(K)$ denote the algebra of real
valued continuous functions on $K.$ Given a real Banach space $X,$ by $C(K,X)$
we denote the Banach space of continuous $X$-valued functions on $K.$ For each
$f\in C(K,X)$ by $|f|_{X}\in C(K)$ we denote its norm function. That is,
\[
|f|_{X}(t)=||f(t)||
\]
for each $t\in K.$ Furthermore $||f||=||\ |f|_{X}||_{C(K)}$ gives the norm on
the space $C(K,X).$ For each $a\in C(K),$ one defines the product $af\in
C(K,X)$ pointwise in the usual manner. With this product, $C(K,X)$ is a Banach
$C(K)$-module. For each $x\in X$ , we denote by $\overset{\rightarrow}{x}\in
C(K,X)$ the constant function defined by $\overset{\rightarrow}{x}(t)=x$ for
each $t\in K.$ Let $M(X)$ denote the submodule of $C(K,X)$ generated by the
constant functions. When $f\in C(K,X)$, its range $f(K)$ is compact. Therefore
$f(K)$ is a separable subset of $X.$ On the other hand, it is clear that the
range of each function in $M(X)$ is contained in a finite dimensional subspace
of $X$. Moreover if $f\in C(K,X)$ has its range contained in a finite
dimensional subspace of $X$ , then $f$ is in $M(X).$ That is, $f=\sum
a_{i}\overset{\rightarrow}{x}_{i}$ for $a_{i}\in C(K)$ and $x_{i}\in X$ with
$i=1,2\ldots n$ for some positive integer $n.$

\begin{lemma}
$M(X)$ is dense in $C(K,X).$
\end{lemma}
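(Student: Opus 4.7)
The plan is to run a standard partition-of-unity approximation, leveraging the uniform continuity of any $f\in C(K,X)$ on the compact space $K$. Fix $f\in C(K,X)$ and $\varepsilon>0$. First I would use continuity of $f$ together with compactness of $K$ to cover $K$ by finitely many open sets $U_{1},\dots,U_{n}$ such that on each $U_{i}$ the oscillation of $f$ is at most $\varepsilon$; concretely, pick $t_{i}\in U_{i}$ with $\|f(s)-f(t_{i})\|_{X}<\varepsilon$ for all $s\in U_{i}$.

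Next I would invoke the fact that a compact Hausdorff space is normal, so there exists a continuous partition of unity $\{a_{1},\dots,a_{n}\}\subset C(K)$ subordinate to $\{U_{1},\dots,U_{n}\}$: each $a_{i}\geq 0$, $\operatorname{supp}(a_{i})\subset U_{i}$, and $\sum_{i=1}^{n}a_{i}=1$ on $K$. The natural candidate for an approximant is
\[
g=\sum_{i=1}^{n}a_{i}\,\overset{\rightarrow}{x}_{i}\in M(X),\qquad x_{i}=f(t_{i}).
\]
By construction, $g$ has range in the finite-dimensional subspace spanned by $x_{1},\dots,x_{n}$, so it lies in $M(X)$ as described in the paragraph preceding the lemma.

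To estimate the distance, I would rewrite, using $\sum_{i}a_{i}(t)=1$,
\[
f(t)-g(t)=\sum_{i=1}^{n}a_{i}(t)\bigl(f(t)-f(t_{i})\bigr),\qquad t\in K,
\]
and take norms to get $\|f(t)-g(t)\|_{X}\leq\sum_{i=1}^{n}a_{i}(t)\|f(t)-f(t_{i})\|_{X}$. The key observation is that whenever $a_{i}(t)>0$ we have $t\in U_{i}$ and hence $\|f(t)-f(t_{i})\|_{X}<\varepsilon$; consequently $\|f(t)-g(t)\|_{X}<\varepsilon\sum_{i=1}^{n}a_{i}(t)=\varepsilon$ uniformly in $t$, so $\|f-g\|<\varepsilon$.

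There is no real obstacle here beyond packaging the standard tools; the only slightly delicate point is securing the subordinate partition of unity, which is the usual Urysohn/normality step for compact Hausdorff spaces. Since $\varepsilon$ was arbitrary, this shows $f$ lies in the closure of $M(X)$, establishing density.
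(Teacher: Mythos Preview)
Your proof is correct and follows essentially the same partition-of-unity argument as the paper: the paper obtains the cover by pulling back $\varepsilon$-balls from $f(K)\subset X$ (using compactness of $f(K)$), while you phrase it in terms of small oscillation on each $U_i$, but the construction of $g=\sum a_i\overset{\rightarrow}{x}_i$ and the estimate $\|f-g\|<\varepsilon$ are identical.
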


\begin{proof}
Let $f\in C(K,X)$ and $\varepsilon>0.$ There exist $\{x_{1,}x_{2}\ldots
x_{n}\}$ in $f(K)$ such that the open balls $B(x_{i},\varepsilon)$ in $X$ with
$i=1,2\ldots n$ cover $f(K).$ Let $U_{i}=f^{-1}(B(x_{i},\varepsilon))$ and
$t_{i}\in U_{i}$ such that $f(t_{i})=x_{i}$ with $i=1,2\ldots n.$ The
collection $\{U_{i}:i=1,2\ldots n\}$ is a finite open cover of $K.$ Let the
collection $\{a_{i}:i=1,2\ldots n\}$ in the positive unit ball of $C(K)$ be a
partition of unity subordinate to the cover $\{U_{i}:i=1,2\ldots n\}.$ Let
$g=\sum a_{i}\overset{\rightarrow}{x_{i}}.$ Then $||f-g||<\varepsilon.$
\end{proof}

Let $F$ be a Banach lattice with a quasi-interior point $u$ \cite[II.6.1]{S}.
Let its ideal center $Z(F)=C(K)$ both as an algebra and a Banach lattice.
Recall that an operator $T$ on $F$ is in the ideal center if for some
$\lambda>0,$ $|T(x)|\leq\lambda x$ for each non-negative $x\in F.$ That $F$
has a quasi-interior point $u$ means that $Z(F)u$ is dense in $F$
\cite[Proposition 1.1]{W1}. (Because of this fact a quasi-interior point is
also called a topological order unit.) Then for each $x\in F,$ the closure of
$Z(F)x$ is equal to the closed ideal $I(x)$ of $F$ generated by $x.$ Without
loss of generality we suppose $||u||_{F}=1.$ Then we have that $F$ is a Banach
$C(K)$-module. Also it is well known \cite[Theorem III.4.5]{S} that $F$ has a
functional representation as an ideal in $C_{\infty}(K),$ the set of extended
continuous functions on $K$ into $[-\infty,\infty].$ Recall that a continuous
function on $K$ into $[-\infty,\infty]$ is called an extended continuous
function if it is finite on a dense open subset of $K.$ For each $a\in C(K),$
$au$ in $F$ is represented by $a$ in $C_{\infty}(K).$ In particular $u$ is
represented by $1$ and $C(K)$ in $C_{\infty}(K)$ corresponds to the ideal
generated \ by $u$ in $F.$ In view of this we will always think of $F$ in
terms of its representation in $C_{\infty}(K)$ and replace the quasi-interior
point $u$ by the function $1.$

We introduce a new norm on $C(K,X)$ by%
\[
||f||_{F(X)}=||\text{ }|f|_{X}||_{F}%
\]
for each $f\in C(K,X).$ Let $F_{\pi}(X)$ denote the completion of $C(K,X)$ in
this norm. It is clear that $F_{\pi}(X)$ is also a Banach $C(K)$-module. Let
$f\in F_{\pi}(X)\smallsetminus C(K,X)$ and let $\{f_{n}\}$ be a sequence in
$C(K,X)$ that converges to $f$ in $F_{\pi}(X)$. Then the sequence of norm
functions $\{|f_{n}|_{X}\}$ is a Cauchy sequence in $F$ and converges to a
non-negative element of $F.$ We will denote the limit point by $|f|_{X}$ and
call it the norm function of $f\in F_{\pi}(X).$ We are justified in this since
$||f||_{F(X)}=||$ $|f|_{X}||_{F}$ and $|f|_{X}\in C_{\infty}(K).$

It is well known (see e.g., \cite{V}, \cite[Chapter V.3]{S}, \cite{K}) that
each cyclic subspace of a Banach $C(K)$-module $M$ may be represented as a
Banach lattice with quasi-interior point. Namely, for each $x\in M,$ the
cyclic subspace $M(x)=cl(C(K)x)$ is a Banach lattice with positive cone
$cl(C(K)_{+}x)$ and quasi-interior point $x.$ In general, the ideal center of
$M(x)$ is $w$-$cl(C(K)_{|M(x)})$ (see e.g., \cite[Theorem 1]{O1}). Here `$cl$'
denotes closure in norm in $M$ and `$w$-$cl$' the closure in the weak operator
topology when we consider the representation of $C(K)$ as an algebra of operators on $M(x).$ In the case of $F_{\pi}(X),$ we get more precise
information on the structure of its cyclic subspaces.

\begin{lemma}
For each $f\in F_{\pi}(X),$ the cyclic subspace $F(f)$ is isometric and
lattice isomorphic to the closed ideal $I(|f|_{X})$ of $F.$ In particular, for
each $0\neq x\in X,$ the cyclic subspace $F(\overset{\rightarrow}{x})$ is
isometric and lattice isomorphic to $F$.
\end{lemma}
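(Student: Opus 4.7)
My plan is to define, on the dense subset $C(K)f$ of the cyclic subspace $F(f)=\overline{C(K)f}$, the natural map $\Phi(af)=a|f|_X$, and then show that $\Phi$ is a well-defined linear isometry from $C(K)f$ onto the dense subset $C(K)|f|_X$ of $I(|f|_X)$ that extends to an isometric lattice isomorphism of $F(f)$ onto $I(|f|_X)$.

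The key identity driving the proof is
\[
|af|_X \;=\; |a|\cdot|f|_X \qquad (a\in C(K),\ f\in F_\pi(X)),
\]
which is immediate pointwise when $f\in C(K,X)$. To extend it to arbitrary $f\in F_\pi(X)$, I would pick $f_n\in C(K,X)$ with $f_n\to f$ in $F_\pi(X)$ and use three continuities: the $C(K)$-action is bounded on $F_\pi(X)$ and on $F$, while the norm-function assignment $g\mapsto|g|_X$ is a $1$-Lipschitz map from $F_\pi(X)$ into $F$ by virtue of the pointwise inequality $||f_n|_X-|f_m|_X|\le|f_n-f_m|_X$ together with the fact that $F$ carries a lattice norm. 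Passing to the limit in $|af_n|_X=|a|\cdot|f_n|_X$ yields the identity.

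With the identity in hand, well-definedness and isometry of $\Phi$ are routine: if $af=bf$, then $|a-b|\cdot|f|_X=|(a-b)f|_X=0$, whence $a|f|_X=b|f|_X$ in $F$; and
\[
\|af\|_{F_\pi(X)}=\bigl\||af|_X\bigr\|_F=\bigl\||a|\cdot|f|_X\bigr\|_F=\bigl\|a|f|_X\bigr\|_F
\]
because $F$ has a lattice norm. Hence $\Phi$ extends by continuity to a surjective isometry $F(f)\to\overline{C(K)|f|_X}=I(|f|_X)$, the last equality being the density $\overline{Z(F)g}=I(g)$ recalled earlier for $F$. To upgrade this to a lattice isomorphism, I note that $\Phi$ sends the generating set $C(K)_+f$ of the positive cone of $F(f)$ onto the generating set $C(K)_+|f|_X$ of the positive cone of $I(|f|_X)$; passing to closures yields a bipositive surjective isometry, which is automatically a lattice isomorphism. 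For the particular case $f=\vec{x}$ with $x\ne0$, we have $|\vec{x}|_X=\|x\|\cdot 1$, and since $1$ is the quasi-interior point of $F$ we get $I(|\vec{x}|_X)=I(1)=F$.

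The main technical obstacle is the identity $|af|_X=|a|\cdot|f|_X$ in the generality of $f\in F_\pi(X)$: the norm function of an element outside $C(K,X)$ is only defined by approximation, so one has to verify that this abstractly defined object behaves compatibly with the $C(K)$-module action. Once that is settled, the rest of the proof is a formal consequence of the identity and the Banach lattice structure of $F$.
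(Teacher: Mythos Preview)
Your proposal is correct and follows essentially the same approach as the paper: establish the identity $|af|_X=|a|\,|f|_X$ pointwise for $f\in C(K,X)$, extend it to all of $F_\pi(X)$ by approximation, and read off from it that $af\mapsto a|f|_X$ is an isometric lattice isomorphism of $C(K)f$ onto $C(K)|f|_X$, which then passes to the closures $F(f)$ and $I(|f|_X)$. The paper's own proof is considerably terser---it simply asserts that ``the same equality also follows for $f\in F_\pi(X)\setminus C(K,X)$'' and that this ``proves that the submodule generated by $f$ \dots\ is isometric and lattice isomorphic to the sublattice $C(K)|f|_X$''---whereas you spell out the $1$-Lipschitz continuity of $g\mapsto|g|_X$, the well-definedness of $\Phi$, and the bipositivity argument for the lattice structure; but the underlying idea and the key identity are identical.
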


\begin{proof}
Let $f\in C(K,X)$ and $a\in C(K).$ Then, for each $t\in K,$
\[
|af|_{X}(t)=||a(t)f(t)||=|a(t)|\text{ }||f(t)||=|a(t)||f|_{X}(t).
\]

Therefore $|af|_{X}=|a||f|_{X}.$ It follows that
\[
||af||_{F(X)}=||\text{ }|a||f|_{X}||_{F}=||a|f|_{X}||_{F}.
\]
Then the same equality also follows for $f\in F_{\pi}(X)\smallsetminus C(K,X)$
for any $a\in C(K).$ This proves that the submodule generated by $f$ in
$F_{\pi}(X)$ is isometric and lattice isomorphic to the sublattice
$C(K)|f|_{X}$ of $F.$ By passing to the closure we complete the first part of
the result. For any $x\in X,$ we have $|\overset{\rightarrow}{x}|_{X}=||x||1$
in $F.$ Since $1$ is a quasi-interior point, we have $F(\overset{\rightarrow
}{x})\cong F.$
\end{proof}

We will call an operator $T$ on a Banach $C(K)$-module $M,$ a\textbf{
multiplication operator} if $T(x)=ax$ for some $a\in C(K),$ for all $x\in M.$
For Banach lattices $F$ with quasi-interior point and ideal center
$Z(F)=C(K),$ the ideal center is maximal abelian (see e.g., \cite[Theorem
2.4]{W1}, \cite[Corollary 3]{O1}). That is, an operator on $F$ which commutes
with $C(K)$ is a multiplication operator.

\begin{theorem}
Suppose $K$ is a compact Hausdorff space and $F$ is a Banach lattice with
quasi-interior point $u$ and ideal center $Z(F)=C(K).$ Suppose $X$ is a Banach
space. Let $T$ be an operator on $F_{\pi}(X).$ Consider the following conditions:

\begin{enumerate}
\item[(i)] \textit{ }$T$\textit{ is a multiplication operator. }

\item[(iia)] \textit{ For each }$x\in X,$\textit{ the cyclic subspace
}$F(\overset{\rightarrow}{x})$\textit{ is left invariant by }$T.$\textit{ }

\item[(iib)] \textit{ For each }$x\in X,$\textit{ }$T$\textit{ commutes with
}$C(K)$\textit{ on }$F(\overset{\rightarrow}{x}).$
\end{enumerate}

Then (i) $\Leftrightarrow$\textit{(iia) and (iib).}
\end{theorem}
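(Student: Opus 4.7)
\emph{Proof plan.} The implication (i) $\Rightarrow$ (iia) and (iib) is immediate, since multiplication by a fixed $a \in C(K)$ commutes with every element of $C(K)$ and preserves every $C(K)$-submodule, in particular every $F(\overset{\rightarrow}{x})$. The substantive direction is the converse, which I would argue in three stages.

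\emph{Step 1: a multiplier on each cyclic subspace.} Fix a nonzero $x \in X$. Lemma~2 furnishes a lattice isometry $\Phi_{x} : F(\overset{\rightarrow}{x}) \to F$ that intertwines the $C(K)$-action on $F(\overset{\rightarrow}{x})$ (inherited from $F_{\pi}(X)$) with the ideal-center action of $C(K) = Z(F)$ on $F$. By (iia), $T$ restricts to a bounded operator on $F(\overset{\rightarrow}{x})$; by (iib), this restriction commutes with $C(K)$. Conjugating by $\Phi_{x}$ yields a bounded operator on $F$ commuting with $Z(F)$, and the maximal-abelianness of $Z(F)$ recalled just before the theorem forces it to lie in $C(K)$. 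Pulling back, there exists $a_{x} \in C(K)$ with $T(g) = a_{x} g$ for every $g \in F(\overset{\rightarrow}{x})$; in particular $T(\overset{\rightarrow}{x}) = a_{x} \overset{\rightarrow}{x}$.

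\emph{Step 2: $a_{x}$ is independent of $x$.} For nonzero $x, y \in X$ that are linearly independent, applying $T$ to $\overset{\rightarrow}{x} + \overset{\rightarrow}{y} = \overset{\rightarrow}{x+y}$ gives $a_{x} \overset{\rightarrow}{x} + a_{y} \overset{\rightarrow}{y} = a_{x+y}(\overset{\rightarrow}{x} + \overset{\rightarrow}{y})$, which rearranges to $c \overset{\rightarrow}{x} + d \overset{\rightarrow}{y} = 0$ in $F_{\pi}(X)$ with $c = a_{x+y} - a_{x}$ and $d = a_{x+y} - a_{y}$ in $C(K)$. Since $c\overset{\rightarrow}{x} + d\overset{\rightarrow}{y} \in C(K, X)$, its norm function $t \mapsto \|c(t)x + d(t)y\|$ lies in $C(K)$ and vanishes in $F$; since $C(K) \hookrightarrow F$ (via $h \mapsto h u$) is injective, it vanishes pointwise on $K$, and linear independence then forces $c \equiv d \equiv 0$, so $a_{x} = a_{y} = a_{x+y}$. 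If instead $y = \lambda x$ with $\lambda \neq 0$, applying $T$ to $\overset{\rightarrow}{y} = \lambda \overset{\rightarrow}{x}$ gives $a_{y}\overset{\rightarrow}{y} = a_{x} \overset{\rightarrow}{y}$, and the same pointwise argument yields $a_{x} = a_{y}$. Hence there is one $a \in C(K)$ with $T(\overset{\rightarrow}{x}) = a \overset{\rightarrow}{x}$ for every $x \in X$.

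\emph{Step 3: extend by density.} Using (iib) on each $F(\overset{\rightarrow}{x_{i}})$, for any finite sum $\sum b_{i} \overset{\rightarrow}{x_{i}} \in M(X)$ (with $b_{i} \in C(K)$, $x_{i} \in X$) one computes $T(\sum b_{i} \overset{\rightarrow}{x_{i}}) = \sum b_{i} T(\overset{\rightarrow}{x_{i}}) = a \sum b_{i} \overset{\rightarrow}{x_{i}}$, so $T$ agrees with multiplication by $a$ on $M(X)$. By Lemma~1, $M(X)$ is dense in $C(K, X)$ in the sup norm; since $\|\cdot\|_{F(X)} \leq \|\cdot\|_{C(K,X)}$ on $C(K,X)$ (because $\|h u\|_{F} \leq \|h\|_{\infty}$ for $h \in C(K)$), this density persists in the $F_{\pi}(X)$ norm, and $C(K, X)$ is dense in $F_{\pi}(X)$ by construction. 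Boundedness of $T$ and of multiplication by $a$ on $F_{\pi}(X)$ (the latter coming from the $C(K)$-module structure) then extends $T(f) = af$ to all $f \in F_{\pi}(X)$. The main obstacle is Step~2: the key leverage is that elements of $M(X)$ are genuine continuous $X$-valued functions, so identities in $F_{\pi}(X)$ between them reduce, via $|\cdot|_{X}$ and the injectivity of $C(K) \hookrightarrow F$, to pointwise identities on $K$ where linear (in)dependence can be exploited.
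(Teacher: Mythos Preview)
Your proposal is correct and follows essentially the same approach as the paper's proof: obtain a multiplier $a_{x}$ on each $F(\overset{\rightarrow}{x})$ via Lemma~2 and the maximal-abelianness of the center, show $a_{x}$ is independent of $x$ by the linear-independence argument applied to $T(\overset{\rightarrow}{x}+\overset{\rightarrow}{y})$, and extend to all of $F_{\pi}(X)$ by density of $M(X)$. Your Step~2 spells out a bit more than the paper does (the pointwise reduction via $|\cdot|_{X}$ and the collinear case), but the structure and ideas are identical.
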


\begin{proof}
The implication (i)$\Rightarrow$(iia) and (iib) is clear. Conversely assume
that (iia) and (iib) hold. Let $x\in X.$ From Lemma 2, we have that
$F(\overset{\rightarrow}{x})$ is isometric and lattice isomorphic to $F.$ Then
(iia) implies that we may consider $T$ restricted to $F(\overset{\rightarrow
}{x})$ as an operator on $F.$ Then (iib) and the discussion preceeding the
theorem imply that $T$ is a multiplication operator on $F(\overset
{\rightarrow}{x}).$ That is there exists $a_{x}\in C(K)$ such that
$T(f)=a_{x}f$ for all $f\in F(\overset{\rightarrow}{x}).$ Suppose $x,y\in X$
are linearly independent. Then%
\[
a_{x}\overset{\rightarrow}{x}+a_{y}\overset{\rightarrow}{y}=T(\overset
{\rightarrow}{x}+\overset{\rightarrow}{y})=a_{x+y}(\overset{\rightarrow}%
{x}+\overset{\rightarrow}{y}).
\]

Therefore%
\[
(a_{x}-a_{x+y})\overset{\rightarrow}{x}=(a_{x+y}-a_{y})\overset{\rightarrow
}{y}.
\]

From the linear independence of $x,y,$ it easily follows that
\[
(a_{x}-a_{x+y})=(a_{x+y}-a_{y})=0.
\]

Hence there exists an $a\in C(K)$ such that $T(f)=af$ for all $f\in M(X).$ By
Lemma 1, $M(X)$ is dense in $C(K,X)$ and by construction $C(K,X)$ is dense in
$F_{\pi}(X).$ This means that $M(X)$ is dense in $F_{\pi}(X).$ Therefore by
continuity $T$ is multiplication by $a$ on all of $F_{\pi}(X).$
\end{proof}

We point out that Lemma 2 and Theorem 3 hold for $C(K,X).$ Namely let
$F=C(K),$ then $F_{\pi}(X)=C(K,X).$

\section{K\"{o}the-Bochner spaces}

Let $(\Omega,\Sigma,\mu)$ be a complete probability measure space. Let
$L(\mu)$ denote the Dedekind complete vector lattice of the equivalence
classes of almost everywhere finite measurable functions over $(\Omega
,\Sigma,\mu)$. The ideal center of $L(\mu)$ is $L^{\infty}(\mu)=C(S).$ Due to
the fact that $L^{1}(\mu)^{\ast}=L^{\infty}(\mu),$ the space $S$ is
hyperstonian. The characteristic functions of the sets of positive measure in
$\Sigma$ correspond to the characteristic functions of the clopen subsets of
$S.$ This correspondance gives the identification between $L^{\infty}(\mu)$
and $C(S).$ We will also assume the identification of $L(\mu)$ with
$C_{\infty}(S).$ We denote by $E$ a Banach function space in $L(\mu).$ We
assume that $1\in E$ with unit norm, that $E$ is an ideal in $L^{1}(\mu),$ and
that this embedding is continuous. Clearly $E$ is Dedekind complete and $1$ is
a weak order unit in $E.$ We let $F=I(1),$ the Banach function space given by
the closed ideal generated by $1$ in $E.$ Then $1$ is a quasi-interior point
of $F.$ $L^{\infty}(\mu)$ is the ideal center of both $E$ and $F.$ Let $X$ be
a Banach space. On $X,$ we take the Borel $\sigma$-algebra with respect to the
norm. By $L(\mu,X)$ we denote the $L^{\infty}(\mu)$-module consisting of the
equivalence classes of the strongly measurable functions on $\Omega$ into $X$.
That is, $f\in L(\mu,X)$ means $f:\Omega\rightarrow X$ is measurable and
$f(\Omega\smallsetminus A)$ is separable in $X$ for some $A\in\Sigma$ with
$\mu(A)=0$ . As in the previous section $|f|_{X}\in L(\mu)$ will denote the
norm function of a strongly measurable function $f.$ The
\textbf{K\"{o}the-Bochner} space $E(X)$ is defined (see e.g. \cite[Section
3.4]{P}) to consist of all functions $f\in L(\mu,X)$ with $|f|_{X}\in E.$ It
can be shown that $E(X)$ is a Banach $L^{\infty}(\mu)$-module when it is
equipped with the norm%
\[
||f||_{E(X)}=||\text{ }|f|_{X}||_{E}\text{ .}%
\]

When $E$ is $L^{1}(\mu)$ or $L^{\infty}(\mu),$ as usual, we denote $E(X)$ by
$L^{1}(\mu,X)$ and $L^{\infty}(\mu,X)$ respectively. $L^{1}(\mu,X)$ is called
the space of Bochner-integrable functions \cite[II.2.2]{DU}. It is clear from
the definition that we always have the continuous embeddings
\[
L^{\infty}(\mu,X)\subset E(X)\subset L^{1}(\mu,X).
\]
We denote by $S(X)$ the functions with finite range and by $\sigma$-$S(X)$ the
functions with countable range in $L(\mu,X).$ Let $\{e_{i}:i\in\mathcal{I}\}$
be a set of pairwise disjoint idempotents in $L^{\infty}(\mu)$ and let
$\{x_{i}:i\in\mathcal{I\}}$ be a set of distinct elements in $X$ for some
index set $\mathcal{I}$. Then if $\mathcal{I}$ is finite $f=\sum e_{i}%
\overset{\rightarrow}{x_{i}}$ is in $S(X)$ and if $\mathcal{I}$ is at most
countable $f$ is in $\sigma$-$S(X).$ Conversely each element of $S(X)$ or
$\sigma$-$S(X)$ may be written in this prescribed form. We include the proof
of the next lemma for the sake of completeness \cite[II.1.3]{DU}.

\begin{lemma}
Suppose $f\in L(\mu,X)$ and $\varepsilon>0.$ Then there exists $g\in\sigma
$-$S(X)$ such that $||f-g||_{\infty}<\varepsilon.$
\end{lemma}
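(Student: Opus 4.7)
The plan is to realize this as the classical Diestel--Uhl approximation lemma, adapted to the notation of the paper. Since $f \in L(\mu,X)$ is strongly measurable, it is essentially separably valued: there is $A \in \Sigma$ with $\mu(A)=0$ such that $f(\Omega \setminus A)$ is separable in $X$. Fix a countable dense set $\{x_n : n \in \mathbb{N}\}$ in $f(\Omega \setminus A)$.

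The open balls $B(x_n,\varepsilon)$ then cover $f(\Omega \setminus A)$. Since $f$ is measurable, each $E_n := f^{-1}(B(x_n,\varepsilon)) \cap (\Omega \setminus A)$ lies in $\Sigma$. I would disjointify these by setting $F_1 = E_1$ and $F_n = E_n \setminus \bigcup_{k<n} E_k$, so that $\{F_n\}$ is a pairwise disjoint measurable partition of $\Omega \setminus A$ (with some $F_n$ possibly empty, which I simply discard). Let $e_n \in L^{\infty}(\mu)$ be the idempotent corresponding to $F_n$; these are pairwise disjoint idempotents in the sense of the paper, and the $x_n$ can be taken distinct (again discarding repetitions).

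Define $g = \sum_n e_n \overset{\rightarrow}{x}_n$. Since the index set is at most countable, $g \in \sigma\text{-}S(X)$ as defined just before the lemma. For a.e.\ $\omega \in \Omega \setminus A$, there is a unique $n$ with $\omega \in F_n$, whence $g(\omega) = x_n$ and $\omega \in E_n$, so $\|f(\omega) - g(\omega)\| < \varepsilon$; on $A$ the behavior is irrelevant because $\mu(A)=0$. Hence $|f-g|_X \le \varepsilon$ almost everywhere, giving $\|f-g\|_\infty \le \varepsilon$; replacing $\varepsilon$ by $\varepsilon/2$ at the outset yields strict inequality.

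There is no real obstacle: the only subtleties are (i) ensuring the sets $F_n$ are measurable, which follows from strong measurability and the fact that open balls are Borel, and (ii) checking that the resulting countably-valued function conforms to the paper's formal definition of $\sigma\text{-}S(X)$, which is a matter of bookkeeping (distinct $x_n$, disjoint idempotents). The essentially separably valued property supplied by strong measurability is what makes the countable index set suffice.
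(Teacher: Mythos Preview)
Your proof is correct and follows essentially the same route as the paper's: cover the separable range $f(\Omega\setminus A)$ by countably many balls, pull back and disjointify, then build $g$ as the corresponding element of $\sigma\text{-}S(X)$. The only cosmetic differences are that the paper starts with balls of radius $\varepsilon/2$ and takes as values $y_i=f(t_i)$ for some $t_i\in V_i$ (rather than the ball centers), which via the triangle inequality yields the strict bound $\|f-g\|_\infty<\varepsilon$ directly without your final halving step.
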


\begin{proof}
Let $A$ be a set of measure zero in $\Omega$ such that $f(\Omega\smallsetminus
A)$ is separable in $X.$ Then for any $\varepsilon>0$ there is a countable
collection $\{B(x_{i},\frac{\varepsilon}{2}):x_{i}\in X,$ $i=1,2,\ldots\}$ of
open balls that covers $f(\Omega\smallsetminus A).$ Let the coressponding
collection of measurable sets $\{U_{i}=f^{-1}(B(x_{i},\frac{\varepsilon}%
{2})):i=1,2,...\}$ be given in $\Omega\smallsetminus A.$ Define a disjoint
sequence of measurable sets in $\Omega\smallsetminus A$ as follows:
$V_{1}=U_{1},$and $V_{n+1}=U_{n+1}\smallsetminus\underset{i=1}{\overset
{n}{\cup}}U_{i}$ for each $n=1,2,\ldots.$ It is clear that $\{V_{i}\}$ covers
$\Omega\smallsetminus A.$ Some of the sets in the sequence may be empty in
which case we delete them from the sequence. Let $e_{i}\in L^{\infty}(\mu)$ be
the characteristic function of $V_{i.}$ Take $t_{i}\in V_{i}$ and let
$y_{i}=f(t_{i}).$ Let $g=\sum e_{i}\overset{\rightarrow}{y_{i}}.$ Then
$(f-g)\in L^{\infty}(\mu,X)$ and $||f-g||_{\infty}<\varepsilon.$
\end{proof}

We have the following consequence of Lemma 4.

\begin{lemma}
The subspace $E(X)\cap\sigma$-$S(X)$ is dense in the K\"{o}the-Bochner space
$E(X).$
\end{lemma}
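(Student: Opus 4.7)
The plan is to deduce Lemma 5 as a direct corollary of Lemma 4, using the fact that $1\in E$ with unit norm so that the continuous embedding $L^{\infty}(\mu)\subset E$ has norm one on the constant function $1$.

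First, I would fix $f\in E(X)$ and $\varepsilon>0$. By Lemma 4, there exists $g\in\sigma$-$S(X)$ with $\|f-g\|_{\infty}<\varepsilon$, which by definition means $|f-g|_{X}\leq\varepsilon\cdot 1$ in $L(\mu)$. Next I would check that $g\in E(X)$: the pointwise estimate
\[
|g|_{X}\leq|f|_{X}+|f-g|_{X}\leq|f|_{X}+\varepsilon\cdot 1
\]
combined with $|f|_{X}\in E$, $1\in E$, and the fact that $E$ is a vector lattice shows $|g|_{X}\in E$, hence $g\in E(X)\cap\sigma$-$S(X)$. Finally I would estimate the $E(X)$-norm:
\[
\|f-g\|_{E(X)}=\big\|\,|f-g|_{X}\,\big\|_{E}\leq\varepsilon\,\|1\|_{E}=\varepsilon,
\]
where the middle inequality uses that $E$ is an ideal (so $0\leq h\leq \varepsilon\cdot 1$ implies $\|h\|_{E}\leq\varepsilon\|1\|_{E}$), and the last equality uses the standing assumption $\|1\|_{E}=1$.

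Since $\varepsilon>0$ was arbitrary, this produces the desired approximation and gives density of $E(X)\cap\sigma$-$S(X)$ in $E(X)$. There is no real obstacle here; the only point to verify carefully is that the approximating function $g$ produced by Lemma 4 actually lies in $E(X)$, which is handled by the ideal property of $E$ together with the hypothesis $1\in E$. Without $1\in E$ one would have to truncate or localize $f$ first, but that complication does not arise under the hypotheses in force.
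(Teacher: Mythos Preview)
Your proof is correct and follows essentially the same approach as the paper's own proof: invoke Lemma 4 to obtain $g\in\sigma$-$S(X)$ with $\|f-g\|_{\infty}<\varepsilon$, use the triangle inequality on norm functions together with $L^{\infty}(\mu)\subset E$ to conclude $g\in E(X)$, and then read off $\|f-g\|_{E(X)}<\varepsilon$. The only difference is that you spell out the role of the ideal property and the normalization $\|1\|_{E}=1$ a bit more explicitly than the paper does.
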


\begin{proof}
Let $f\in E(X)$ and let $\varepsilon>0.$ By Lemma 4, there is $g\in\sigma
$-$S(X)$ such that $(g-f)\in L^{\infty}(\mu,X)$ and $||g-f||_{\infty
}<\varepsilon.$ We have $|g|_{X}\leq|g-f|_{X}+|f|_{X}$ and since $L^{\infty
}(\mu)\subset E$, it follows that $g\in E(X)$ and $||g-f||_{E(X)}%
<\varepsilon.$
\end{proof}

We will make a brief comparison of the spaces defined in the previous section
and the K\"{o}the-Bochner spaces. Note that when we have $E=L^{\infty}%
(\mu)=C(S)=F.$ However, when $X$ is infinite dimensional, $C(S,X)$ is a proper
closed subspace of $L^{\infty}(\mu,X).$ On the other hand the opposite is also
possible. When $E=L^{1}(\mu)=cl(L^{\infty}(\mu)1)=F,$ we have $L^{1}%
(\mu,X)=L_{\pi}^{1}(\mu,X).$ This follows from the fact that $S(X)$ is dense
in $L^{1}(\mu,X)$ and that $M(X)$ is dense in $L_{\pi}^{1}(\mu,X)$ with
$S(X)\subset M(X).$ In the case when $F=I(1)$ is a proper ideal of $E,$ we
have the isometric inclusions%
\[
F_{\pi}(X)\subset F(X)\subset E(X).
\]
The embedding of $F(X)$ into $E(X)$ is always proper. However, if $L^{\infty
}(\mu,X)\subset F_{\pi}(X)$ or equivalently if $F$ has order continuous norm,
then $F_{\pi}(X)=F(X)$ . Otherwise that inclusion is also proper (see Remark
15 and Remark 17).

When $S$ is a Stonian compact Hausdorff space, a $C(S)$-module $M$ is called a
\textbf{Kaplansky module, } \cite{Ka} if for each $x\in M,$ the set $\{a\in
C(S):ax=0\}$ is a band in $C(S).$ Every Dedekind complete vector lattice is a
Kaplansky module over its ideal center.

\begin{lemma}
$E(X)$ is a Kaplansky $L^{\infty}(\mu)$-module.
\end{lemma}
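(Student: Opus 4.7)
The plan is to reduce the vector-valued statement to a scalar statement about $L(\mu)$ via the norm function $|f|_X$, and then invoke the remark immediately preceding the lemma, namely that every Dedekind complete vector lattice is a Kaplansky module over its ideal center.

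First I would verify the identity $|af|_X = |a| \cdot |f|_X$ in $L(\mu)$ for every $f \in E(X)$ and every $a \in L^\infty(\mu)$. Since elements of $E(X)$ are genuine equivalence classes of strongly measurable $X$-valued functions and $af$ is defined by the pointwise product, we have $\|a(t)f(t)\| = |a(t)| \cdot \|f(t)\|$ for almost every $t \in \Omega$, and the identity holds unconditionally, without any density or limiting argument. From this identity it follows that $af = 0$ in $E(X)$ if and only if $|af|_X = 0$ in $E$, which, because $E$ is an ideal in $L(\mu)$ with injective embedding, is the same as $|a|\cdot |f|_X = 0$ in $L(\mu)$, and hence the same as $a \cdot |f|_X = 0$ in $L(\mu)$. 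Therefore
\[
\{a \in L^\infty(\mu) : af = 0 \text{ in } E(X)\} = \{a \in L^\infty(\mu) : a \cdot |f|_X = 0 \text{ in } L(\mu)\}.
\]

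The right-hand side is the annihilator of a single element $|f|_X \in L(\mu)$ under the action of the ideal center $Z(L(\mu)) = L^\infty(\mu)$. Since $L(\mu)$ is Dedekind complete, the paragraph immediately preceding the lemma gives that $L(\mu)$ is a Kaplansky $L^\infty(\mu)$-module, so this annihilator is a band in $L^\infty(\mu)$. Combined with the displayed equality, this yields the conclusion. If one prefers an entirely self-contained argument, one can instead note that $|f|_X$ is represented by an extended continuous function on the hyperstonian space $S$; its cozero set has clopen closure (as $S$ is extremally disconnected), and the annihilator in $C(S) = L^\infty(\mu)$ is precisely the principal band determined by the complementary clopen set.

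I do not anticipate a genuine obstacle here: once the identity $|af|_X = |a|\cdot |f|_X$ is recorded, the vector-valued problem collapses to a structural property of $L(\mu)$ that is essentially given to us. The only point requiring a small amount of care is confirming that passage to the quotient $E(X)$ (equivalence classes modulo null functions) does not introduce elements $a \in L^\infty(\mu)$ that annihilate $f$ without annihilating $|f|_X$, but this is ruled out by the pointwise almost everywhere identity above.
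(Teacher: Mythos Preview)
Your proof is correct and follows essentially the same route as the paper: both reduce the vector-valued annihilator to the scalar one via the pointwise identity $|af|_X=|a|\,|f|_X$, and then invoke the Kaplansky property of the underlying Dedekind complete lattice (you use $L(\mu)$, the paper uses $E$, but either works since $|f|_X\in E$). The only cosmetic difference is that the paper verifies closure of the annihilator under suprema directly, whereas you identify the two annihilator sets and quote the scalar result wholesale.
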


\begin{proof}
Suppose $a_{\alpha}\in L_{+}^{\infty}(\mu)$ for all $\alpha\in\mathcal{I}$ and
$\sup a_{\alpha}=a\in L^{\infty}(\mu).$ Also suppose, for some $f\in E(X),$ we
have $a_{\alpha}f=0$ for all $\alpha\in\mathcal{I}.$ Then $||a_{\alpha
}f||_{E(X)}=0$ implies $a_{\alpha}|f|_{X}=0$ for each $\alpha\in\mathcal{I}$
in $E.$ Since $E$ is a Kaplansky module, we have $a|f|_{X}=0.$ Hence $af=0$
and $E(X)$ is a Kaplansky module.
\end{proof}

The following theorem is an analogue of Theorem 3 for K\"{o}the-Bochner
spaces. Note that it is weaker then Theorem 3, since (iic) requires more of
the operator $T$ than (iib).

\begin{theorem}
Let $E(X)$ be the K\"{o}the-Bochner space associated with a Banach function
space $E$ on the probability measure space $(\Omega,\Sigma,\mu),$ and a Banach
space $X.$ Let $T$ be an operator on $E(X).$ Consider the following conditions:

\begin{enumerate}
\item[(i)] \textit{ }$T$\textit{ is a multiplication operator. }

\item[(iia)] \textit{ For each }$x\in X,$\textit{ the cyclic subspace
}$E(\overset{\rightarrow}{x})$\textit{ is left invariant by }$T.$\textit{ }

\item[(iic)] \textit{ }$T$\textit{ commutes with }$L^{\infty}(\mu)$ on $E(X).$
\end{enumerate}

Then (i) $\Leftrightarrow$\textit{(iia) and (iic).}
\end{theorem}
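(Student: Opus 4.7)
The direction (i) $\Rightarrow$ (iia) and (iic) is immediate, so I focus on the converse and assume (iia) and (iic). The identity $|a\overset{\rightarrow}{x}|_X = |a| \cdot ||x|| \cdot 1$, valid for each $a \in L^\infty(\mu)$ and $0 \neq x \in X$, is the exact analogue of the computation in Lemma 2 and shows that $E(\overset{\rightarrow}{x})$ is isometric and lattice isomorphic to the closed ideal $F = I(1)$ generated by $1$ in $E$. The space $F$ is a Banach lattice with quasi-interior point $1$ and ideal center $L^\infty(\mu)$, so $L^\infty(\mu)$ is maximal abelian in $\mathcal{L}(F)$.

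By (iia) and (iic), the restriction of $T$ to $E(\overset{\rightarrow}{x})$ is a bounded operator commuting with $L^\infty(\mu)$, hence is multiplication by some $a_x \in L^\infty(\mu)$. For linearly independent $x, y \in X$, exactly the computation from the proof of Theorem 3 produces
\[
(a_x - a_{x+y})\overset{\rightarrow}{x} = (a_{x+y} - a_y)\overset{\rightarrow}{y},
\]
read $\mu$-almost everywhere, whence $a_x = a_{x+y} = a_y$ in $L^\infty(\mu)$. So there exists $a \in L^\infty(\mu)$ with $T\overset{\rightarrow}{x} = a\overset{\rightarrow}{x}$ for every $x \in X$, and combining this with (iic) immediately yields $Tf = af$ for every $f \in S(X)$, since any element of $S(X)$ is a finite sum $\sum e_i \overset{\rightarrow}{x_i}$.

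The main obstacle is the passage from $S(X)$ to $\sigma$-$S(X) \cap E(X)$, the dense subset supplied by Lemma 5; this step has no counterpart in Theorem 3 because $M(X)$ need not be dense in $E(X)$. Fix $f = \sum_{i=1}^\infty e_i \overset{\rightarrow}{x_i}$ in $\sigma$-$S(X) \cap E(X)$ with pairwise disjoint idempotents $e_i$, and let $e = \sup_i e_i$ in $L^\infty(\mu)$. Using (iic),
\[
e_j(Tf - af) = T(e_j f) - a\, e_j f = T(e_j \overset{\rightarrow}{x_j}) - a\, e_j \overset{\rightarrow}{x_j} = 0
\]
for every $j$, and $(1-e)f = 0$ together with (iic) gives $(1-e)(Tf - af) = 0$ as well. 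By Lemma 6 the annihilator of $Tf - af$ in $L^\infty(\mu)$ is a band; as a vector subspace it contains every partial sum $e_1 + \cdots + e_n$ and, being order closed, contains $e = \sup_n(e_1 + \cdots + e_n)$, hence also $1 = e + (1-e)$. Therefore $Tf = af$, and Lemma 5 together with continuity of $T$ extends this to all of $E(X)$. The Kaplansky property of Lemma 6 is exactly what makes this last step go through.
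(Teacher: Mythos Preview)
Your proof is correct and follows essentially the same approach as the paper. The only cosmetic difference is that the paper first observes that $F_{\pi}(X)$ is $T$-invariant and then invokes Theorem~3 as a black box to obtain the single multiplier $a\in L^{\infty}(\mu)$ on $F_{\pi}(X)\supset S(X)$, whereas you inline the Theorem~3 argument (the linear-independence computation) directly; the subsequent extension from $S(X)$ to $\sigma\text{-}S(X)\cap E(X)$ via the Kaplansky property (Lemma~6) and then to all of $E(X)$ via Lemma~5 is identical in both proofs.
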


\begin{proof}
We only need to prove (iia) and (iic)$\Rightarrow$(i). Let $F=I(1)$ in $E.$ As
we noted above the definition of the norms show $F_{\pi}(X)$ is a closed
subspace of $E(X).$ Also, for each $x\in X,$ we have $E(\overset{\rightarrow
}{x})=cl(L^{\infty}(\mu)\overset{\rightarrow}{x})=F(\overset{\rightarrow}%
{x})\subset F_{\pi}(X).$ Then condition (iia), in particular, implies that
$T(a\overset{\rightarrow}{x})\in F_{\pi}(X)$ for all $a\in L^{\infty}(\mu)$
and $x\in X.$ That is, $T$ maps $M(X)$, the submodule of $E(X)$ generated by
the constant functions, into $F_{\pi}(X).$ Since $T$ is bounded and $M(X)$ is
dense in $F_{\pi}(X)$ by construction, when we pass to closure in
$T(M(X))\subset F_{\pi}(X),$ we obtain that $F_{\pi}(X)$ is invariant under
$T.$ Then (iia), (iic) and Theorem 3 imply that $T$ is a multiplication
operator on $F_{\pi}(X).$ So there is an $a\in L^{\infty}(\mu)$ such that
$T(f)=af$ for all $f\in F_{\pi}(X).$ Let $g\in E(X)\cap\sigma$-$S(X).$ Suppose
that $\{e_{i}\}$ is a sequence of disjoint idempotents in $L^{\infty}(\mu)$
and $\{x_{i}\}$ is a sequence of distinct elements of $X$ such that $g=\sum
e_{i}\overset{\rightarrow}{x_{i}}.$ We will assume without loss of generality
that $\sup e_{i}=1.$ For each positive integer $n,$ let $\chi(n)=\underset
{1\leq i\leq n}{\sum}e_{i}\in L^{\infty}(\mu).$ Then $\chi(n)g\in S(X)\subset
M(X)\subset F_{\pi}(X),$ for each $n.$ Hence, by (iic) we have, for each $n,$%
\[
\chi(n)T(g)=T(\chi(n)g)=a(\chi(n)g)=\chi(n)(ag).
\]

Since $\sup\chi(n)=\sup e_{i}=1$ and $E(X)$ is a Kaplansky module, we have
$T(g)=ag$ for all $g\in E(X)\cap\sigma$-$S(X).$ By Lemma 5, we have that
$E(X)\cap\sigma$-$S(X)$ is dense in $E(X).$ Therefore $T$ is multiplication by
$a\in L^{\infty}(\mu)$ on all of $E(X).$
\end{proof}

In the next section we will consider the application of Theorem 7 to a
functional equation that Calabuig, Rodr\'{i}guez, and S\'{a}nchez-P\'{e}rez  defined in
\cite{CRS} and used as a criteria for identifying multiplication operators on
K\"{o}the-Bochner spaces.

\section{The functional equation}

Let $T$ be an operator on the K\"{o}the-Bochner space $E(X).$ In \cite{CRS}
the following functional equation was considered :
\[
T(\langle af,x^{\ast}\rangle\overset{\rightarrow}{x})=a\langle T(f),x^{\ast
}\rangle\overset{\rightarrow}{x}%
\]
for all $a\in L^{\infty}(\mu),f\in E(X),x\in X$ and $x^{\ast}\in X^{\ast}.$
Here $\langle x,x^{\ast}\rangle$ denotes the action of the Banach dual
$X^{\ast}$ on the Banach space $X.$ In the functional equation, however,
$\langle f,x^{\ast}\rangle\in E$ and
\[
\langle f,x^{\ast}\rangle(t)=\langle f(t),x^{\ast}\rangle
\]
for all $t\in\Omega\smallsetminus A$ for some set $A$ of measure zero in
$\Omega.$ The authors show in \cite[Corollary 2.3]{CRS} that when $E$ has
order continuous norm, $T$ is a multiplication operator if and only if $T$
satisfies the functional equation. We intend to apply Theorem 3 and Theorem 7
to extend the result to operators on spaces $F_{\pi}(X)$ and to operators on
all K\"{o}the-Bochner spaces $E(X).$

Initially we need to show that the terms of the equation makes sense in
$F_{\pi}(X).$ Let $f\in C(K,X)$ then it is clear that $\langle f,x^{\ast
}\rangle$ makes sense when defined pointwise at each $t\in K$ and we have
$\langle f,x^{\ast}\rangle\in C(K)\subset F.$ For each $t\in K$ $,$ we have
$|\langle f(t),x^{\ast}\rangle|\leq||f(t)||||x^{\ast}||.$ Then in $F,$ we have
$|\langle f,x^{\ast}\rangle|\leq|f|_{X}||x^{\ast}||.$ Therefore for a fixed
$x^{\ast}\in X^{\ast}$ and for all $f\in C(K,X)\subset F_{\pi}(X),$ we get
$||\langle f,x^{\ast}\rangle||_{F}\leq||f||_{F(X)}||x^{\ast}||.$ This means
that for each $x^{\ast}\in X^{\ast},$ the mapping $f\leadsto\langle f,x^{\ast
}\rangle$ defines a bounded linear transformation on the subspace $C(K,X)$ of
$F_{\pi}(X)$ into\ $F.$ Since $C(K,X)$ is dense in $F_{\pi}(X),$ the map
extends uniquely to all of $F_{\pi}(X).$ For each $f\in F_{\pi}%
(X)\smallsetminus C(K,X)$ let $\langle f,x^{\ast}\rangle\in F$ denote its
image under the extension. Also for each $x\in X,$ by Lemma 2 we have
$F(\overset{\rightarrow}{x})\cong F.$ Let $\langle f,x^{\ast}\rangle
\overset{\rightarrow}{x}\in F(\overset{\rightarrow}{x})$ denote the image of
$||x||\langle f,x^{\ast}\rangle\in F$ in this isomorphism. Now it is clear that we
can write the functional equation in $F_{\pi}(X).$

\begin{lemma}
Let $T$ be an operator on $F_{\pi}(X).$ Suppose $T$ satisfies
\[
\text{(iii) }T(a\langle f,x^{\ast}\rangle\overset{\rightarrow}{x})=a\langle
T(f),x^{\ast}\rangle\overset{\rightarrow}{x}%
\]
for all $a\in C(K),f\in F_{\pi}(X),x\in X$ and $x^{\ast}\in X^{\ast}.$ Then

\textit{(iia) For each }$x\in X,$\textit{ the cyclic subspace }$F(\overset
{\rightarrow}{x})$\textit{ is left invariant by }$T.$\textit{ }

\textit{(iib) For each }$x\in X,$\textit{ }$T$\textit{ commutes with }%
$C(K)$\textit{ on }$F(\overset{\rightarrow}{x}).$
\end{lemma}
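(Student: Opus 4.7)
The plan is to exploit the freedom of choice for $x^*$ in the functional equation (iii), via Hahn--Banach, to collapse (iii) into an identity that simultaneously witnesses invariance of $F(\overset{\rightarrow}{x})$ and commutation with $C(K)$. Fix $x \in X$; if $x = 0$ both conclusions are trivial, so assume $x \neq 0$ and choose $x^* \in X^*$ with $\langle x, x^*\rangle = 1$.

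For any $c \in C(K)$, a pointwise computation gives $\langle c\overset{\rightarrow}{x}, x^*\rangle(t) = c(t)\langle x, x^*\rangle = c(t)$, hence $\langle c\overset{\rightarrow}{x}, x^*\rangle = c$ in $F$. Specializing (iii) to $f = c\overset{\rightarrow}{x}$ therefore reads
\[
T(ac\overset{\rightarrow}{x}) = a\langle T(c\overset{\rightarrow}{x}), x^*\rangle \overset{\rightarrow}{x},
\]
where on the left-hand side $ac\overset{\rightarrow}{x}$ is the ordinary pointwise product; this is consistent with the extended notation via the isomorphism $F \cong F(\overset{\rightarrow}{x})$ from Lemma~2, since that identification sends $\overset{\rightarrow}{x}$ to $||x|| \cdot 1$.

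Taking $a = 1$ then gives $T(c\overset{\rightarrow}{x}) = \langle T(c\overset{\rightarrow}{x}), x^*\rangle \overset{\rightarrow}{x}$, whose right-hand side lies in $F(\overset{\rightarrow}{x})$ by definition of the extended notation. Since $C(K)\overset{\rightarrow}{x}$ is dense in $F(\overset{\rightarrow}{x})$ and $T$ is bounded while $F(\overset{\rightarrow}{x})$ is closed, this produces (iia). Substituting the identity back into the displayed equation gives $T(a\cdot c\overset{\rightarrow}{x}) = a\,T(c\overset{\rightarrow}{x})$ for all $a,c \in C(K)$; extending by density and continuity of $T$ and of multiplication-by-$a$ then yields (iib). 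The argument is short, and the only point that needs care is the book-keeping between the pointwise and extended meanings of ``$c\overset{\rightarrow}{x}$'', which is precisely arranged by the choice $\langle x, x^*\rangle = 1$ so that the factor $||x||$ from the isomorphism in Lemma~2 never obstructs the algebra.
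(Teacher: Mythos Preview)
Your proof is correct and follows essentially the same route as the paper: pick $x^*$ with $\langle x,x^*\rangle=1$ via Hahn--Banach so that the functional equation applied to $f=c\overset{\rightarrow}{x}$ collapses to $T(ac\overset{\rightarrow}{x})=a\langle T(c\overset{\rightarrow}{x}),x^*\rangle\overset{\rightarrow}{x}$, from which (iia) and (iib) fall out by taking $a=1$ and then substituting back. The paper's version differs only cosmetically, specializing to $f=\overset{\rightarrow}{x}$ and varying the $C(K)$ coefficient instead, which amounts to the same computation.
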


\begin{proof}
Let $a\in C(K)$ and $x\in X.$ Take $x^{\ast}\in X^{\ast}$ such that $\langle
x,x^{\ast}\rangle=1.$ Then%
\[
T(a\overset{\rightarrow}{x})=T(a\langle\overset{\rightarrow}{x},x^{\ast
}\rangle\overset{\rightarrow}{x})=a\langle T(\overset{\rightarrow}{x}%
),x^{\ast}\rangle\overset{\rightarrow}{x}.
\]

By the discussion preceeding the lemma, since $\langle T(\overset{\rightarrow
}{x}),x^{\ast}\rangle\in F,$ we have $\langle T(\overset{\rightarrow}%
{x}),x^{\ast}\rangle\overset{\rightarrow}{x}\in F(\overset{\rightarrow}{x}).$
Therefore $T(C(K)\overset{\rightarrow}{x})\subset F(\overset{\rightarrow}%
{x}).$ This implies (iia). Let $a,b\in C(K)$ and $x\in X.$ Choose $x^{\ast}\in
X^{\ast}$ such that $\langle x,x^{\ast}\rangle=1.$ Then%
\[
T(ab\overset{\rightarrow}{x})=T(ab\langle\overset{\rightarrow}{x},x^{\ast
}\rangle\overset{\rightarrow}{x})=a(b\langle T(\overset{\rightarrow}%
{x}),x^{\ast}\rangle\overset{\rightarrow}{x})=aT(b\langle\overset{\rightarrow
}{x},x^{\ast}\rangle\overset{\rightarrow}{x})=aT(b\overset{\rightarrow}{x}).
\]
When we pass to closure, we have (iib).
\end{proof}

Now we can state the following corollary to Theorem 3 and prove it by using
Lemma 8.

\begin{corollary}
Let $T$ be an operator on $F_{\pi}(X).$ Then the following are equivalent:

\textit{(i) }$T$\textit{ is a multiplication operator.}

\textit{(iii) The equality}%
\[
\text{ }T(a\langle f,x^{\ast}\rangle\overset{\rightarrow}{x})=a\langle
T(f),x^{\ast}\rangle\overset{\rightarrow}{x}%
\]
\textit{holds for all }$a\in C(K),f\in F_{\pi}(X),x\in X$\textit{ and
}$x^{\ast}\in X^{\ast}.$
\end{corollary}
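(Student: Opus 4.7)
My plan is to observe that this corollary is essentially the combination of two pieces already assembled in the excerpt: Lemma 8 converts the functional equation (iii) into the pair of hypotheses (iia) and (iib), and Theorem 3 says those hypotheses characterize multiplication operators on $F_{\pi}(X)$. So I would break the proof into the two implications and farm out most of the work.

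For the easy direction (i) $\Rightarrow$ (iii), I would just substitute. Assume $T(f)=cf$ for some fixed $c\in C(K)$ and all $f\in F_{\pi}(X)$. Then on the right side $\langle T(f),x^{\ast}\rangle=\langle cf,x^{\ast}\rangle=c\langle f,x^{\ast}\rangle$, where I should note explicitly (using the approximation by $C(K,X)$ and continuity of the pairing extended in the discussion preceding Lemma 8) that pulling a scalar $c\in C(K)$ out of the slot $\langle \cdot,x^{\ast}\rangle$ is legitimate on all of $F_{\pi}(X)$. Under the isometric isomorphism $F(\vec{x})\cong F$ of Lemma 2, multiplication by $a\in C(K)$ on $F$ corresponds to multiplication by $a$ on $F(\vec{x})$, so both sides of (iii) become $ac\langle f,x^{\ast}\rangle\vec{x}\in F(\vec{x})$.

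For the substantive direction (iii) $\Rightarrow$ (i), I would simply invoke Lemma 8 to get conditions (iia) and (iib), and then invoke Theorem 3 to conclude that $T$ is a multiplication operator. This is a one-line argument once the previous results are in place.

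The only point requiring a little care is confirming that the objects in (iii) live where the statement says they do, namely that $\langle f,x^{\ast}\rangle\in F$ for every $f\in F_{\pi}(X)$ and that $\langle f,x^{\ast}\rangle\vec{x}\in F(\vec{x})$; but this is exactly the content of the paragraph preceding Lemma 8, where the extension by density of the pairing from $C(K,X)$ to $F_{\pi}(X)$ is carried out and composed with the isomorphism $F\cong F(\vec{x})$. I would simply cite that discussion. Thus the proof is short and there is no real obstacle, since all of the analytical work is already inside Lemma 8 and Theorem 3.
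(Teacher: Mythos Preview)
Your proposal is correct and matches the paper's own approach exactly: the paper simply states that the corollary follows from Theorem 3 by using Lemma 8, which is precisely your argument for (iii) $\Rightarrow$ (i), and the direction (i) $\Rightarrow$ (iii) is the routine substitution you describe.
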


Next we consider the functional equation in the case of K\"{o}the-Bochner
spaces. In this case difficulties arise when $1\in E$ is not a quasi-interior
point in $E.$ With $F=I(1)$, suppose $\phi\in E\smallsetminus F.$ Then, for
each $x\in X,$ we have $\phi\overset{\rightarrow}{x}\notin E(\overset
{\rightarrow}{x})=F(\overset{\rightarrow}{x})$ by Lemma 2. So for each $x\in
X,$ we need to consider the submodule of $E(X)$ that is given by
$[\overset{\rightarrow}{x}]:=\{\phi\overset{\rightarrow}{x}:\phi\in E\}.$ We
call $[\overset{\rightarrow}{x}]$ the \textbf{band-type submodule} generated
by $\overset{\rightarrow}{x}\in E(X).$

\begin{lemma}
Let $E(X)$ be a K\"{o}the-Bohner space and let $x\in X.$

\begin{enumerate}
\item The band-type submodule $[\overset{\rightarrow}{x}]$ is isometric and
lattice isomorphic to the Banach function space $E.$

\item Suppose that for some $f\in E(X),$ there is a family of upwards directed
idempotents $\{e_{\alpha}:\alpha\in\mathcal{I}\}$ in $L^{\infty}(\mu)$ such
that $\sup e_{\alpha}=1$ and $e_{\alpha}f\in\lbrack\overset{\rightarrow}{x}]$
for each $\alpha\in\mathcal{I}.$ Then $f\in\lbrack\overset{\rightarrow}{x}].$
\end{enumerate}
\end{lemma}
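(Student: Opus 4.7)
For part~(1), assume $x\neq 0$ (otherwise $[\overset{\rightarrow}{x}]=\{0\}$ and the statement is vacuous). Define $\Phi\colon E\to[\overset{\rightarrow}{x}]$ by $\Phi(\phi)=\phi\overset{\rightarrow}{x}$; this is a linear $L^{\infty}(\mu)$-module bijection by the very definition of the band-type submodule. The pointwise identity $|\phi\overset{\rightarrow}{x}|_{X}=|\phi|\,\|x\|$ in $L(\mu)$ gives $\|\Phi(\phi)\|_{E(X)}=\|x\|\,\|\phi\|_{E}$, so after rescaling by $\|x\|$ one has an isometry. Equipping $[\overset{\rightarrow}{x}]$ with the positive cone $\{\phi\overset{\rightarrow}{x}:\phi\in E_{+}\}$ then makes $\Phi$ a lattice isomorphism.

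For part~(2), again assume $x\neq 0$. Using (1), write $e_{\alpha}f=\phi_{\alpha}\overset{\rightarrow}{x}$ with $\phi_{\alpha}\in E$. The first step is to establish the compatibility relation $e_{\alpha}\phi_{\beta}=\phi_{\alpha}$ for $\alpha\leq\beta$: upwards directedness gives $e_{\alpha}e_{\beta}=e_{\alpha}$, whence
\[
e_{\alpha}\phi_{\beta}\overset{\rightarrow}{x}=e_{\alpha}e_{\beta}f=e_{\alpha}f=\phi_{\alpha}\overset{\rightarrow}{x},
\]
and injectivity from (1) finishes. Passing to positive and negative parts one gets $e_{\alpha}\phi_{\beta}^{\pm}=\phi_{\alpha}^{\pm}$, and the nets $\{\phi_{\alpha}^{\pm}\}$ are upwards directed positive families in $L(\mu)$, dominated by $|f|_{X}/\|x\|\in E$.

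The second step is to assemble a global density. Invoking Dedekind completeness of $L(\mu)$, set $\phi^{\pm}:=\sup_{\alpha}\phi_{\alpha}^{\pm}$ and $\phi:=\phi^{+}-\phi^{-}$; the bound $|\phi|\leq|f|_{X}/\|x\|$ together with the ideal property of $E$ in $L(\mu)$ places $\phi$ in $E$. Since multiplication by the central idempotent $e_{\alpha}$ commutes with order suprema, and for every $\beta\geq\alpha$ one has $e_{\alpha}\phi_{\beta}^{\pm}=\phi_{\alpha}^{\pm}$, it follows that $e_{\alpha}\phi=\phi_{\alpha}$, hence $e_{\alpha}(\phi\overset{\rightarrow}{x}-f)=0$ for every $\alpha$. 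As $\sup e_{\alpha}=1$ and $E(X)$ is a Kaplansky $L^{\infty}(\mu)$-module by Lemma~6, the annihilator of $\phi\overset{\rightarrow}{x}-f$ is a band, forcing $\phi\overset{\rightarrow}{x}=f$ and therefore $f\in[\overset{\rightarrow}{x}]$.

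The main obstacle is precisely this gluing step: the scalar coefficients $\phi_{\alpha}$ are not assumed positive and the net need not converge in norm, so one cannot work inside $E$ with its norm topology alone. The fix is three-layered — decompose into positive and negative parts, take order suprema in the Dedekind-complete ambient lattice $L(\mu)$, use that $E$ is an ideal to stay inside $E$, and only then invoke the Kaplansky property of $E(X)$ to promote the local identities $e_{\alpha}(\phi\overset{\rightarrow}{x}-f)=0$ to the global conclusion $\phi\overset{\rightarrow}{x}=f$.
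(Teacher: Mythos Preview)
Your proof is correct. Part~(1) matches the paper essentially verbatim: both identify $[\overset{\rightarrow}{x}]$ with $E$ via $\phi\overset{\rightarrow}{x}\leftrightarrow\|x\|\phi$ using the pointwise identity $|\phi\overset{\rightarrow}{x}|_{X}=|\phi|\,\|x\|$.

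For part~(2) the two arguments diverge in how the global coefficient $\phi$ is built. The paper observes that the compatibility relations $e_{\alpha}\phi_{\beta}=\phi_{\alpha}$ force the \emph{signs} of the $\phi_{\alpha}$ to be coherent, so there is a single idempotent $e\in L^{\infty}(\mu)$ with $e|\phi_{\alpha}|=\phi_{\alpha}^{+}$ and $(1-e)|\phi_{\alpha}|=\phi_{\alpha}^{-}$ for all $\alpha$; setting $a=e-(1-e)$, the candidate is written down in closed form as $\phi=\frac{a}{\|x\|}\,|f|_{X}$, and one checks $e_{\alpha}f=e_{\alpha}(\phi\overset{\rightarrow}{x})$ directly. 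You instead split into positive and negative parts, take order suprema $\phi^{\pm}=\sup_{\alpha}\phi_{\alpha}^{\pm}$ in the Dedekind complete lattice $L(\mu)$, and set $\phi=\phi^{+}-\phi^{-}$. Both routes finish the same way, via the Kaplansky property of $E(X)$ (Lemma~6). Your version is a bit more self-contained, since the existence of the common sign idempotent $e$ in the paper's argument is asserted without explanation (it does follow from the compatibility relations, but this is left to the reader); the paper's version has the virtue of exhibiting $\phi$ by an explicit formula in terms of $|f|_{X}$.
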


\begin{proof}
Clearly $[\overset{\rightarrow}{x}]$ is a submodule of $E(X)$ and it is clear
that $E\subset L(\mu)$ induces on $[\overset{\rightarrow}{x}]$ a vector
lattice structure with respect to which it is a Dedekind complete vector
lattice with weak order unit $\overset{\rightarrow}{x}.$ Let $\phi\in E.$
Then
\[
||\phi\overset{\rightarrow}{x}||_{E(X)}=||\text{ }|\phi\overset{\rightarrow
}{x}|_{X}||_{E}=||\text{ }|\phi|||x||\text{ }||_{E}=||\phi||_{E}||x||=||\text{
}||x||\phi||_{E}.
\]
Hence the correspondence $\phi\overset{\rightarrow}{x}\longleftrightarrow
||x||\phi$ gives the isometric lattice isomorphism between the two spaces.
This completes part (1). To prove part (2), assume the conditions in the
statement of (2). Then , there is a collection of functions $\{\phi_{\alpha
}:\alpha\in\mathcal{I}\}$ in $E$ such that $e_{\alpha}f=\phi_{\alpha}%
\overset{\rightarrow}{x}$ for each $\alpha\in\mathcal{I}$ and $e_{\alpha}%
\phi_{\beta}=\phi_{\alpha}$ whenever $\alpha\leq\beta$ in the order of the
index set $\mathcal{I}.$ Then $e_{\alpha}{|f}|_{X}%
=|\phi_{\alpha}|||x||$ for each $\alpha\in\mathcal{I}$. It follows that
$||x||(\sup|\phi_{\alpha}|)=|f|_{X}.$ Moreover, there exists an idempotent
$e\in L^{\infty}(\mu)$ such that for all $\alpha\in\mathcal{I},$ we have
$e|\phi_{\alpha}|=\phi_{\alpha}^{+}$ and $(1-e)|\phi_{\alpha}|=\phi_{\alpha
}^{-}.$ Let $a=e-(1-e).$ Hence $e_{\alpha}a|f|_{X}=a|\phi_{\alpha}%
|||x||=\phi_{\alpha}||x||$ and $e_{\alpha}f=\phi_{\alpha}\overset{\rightarrow
}{x}=e_{\alpha}(\frac{a}{||x||}|f|_{X})\overset{\rightarrow}{x}$ for each
$\alpha\in\mathcal{I}.$ Since $E(X)$ is a Kaplansky module (Lemma 6), we have
\[
f=\frac{a}{||x||}|f|_{X}\overset{\rightarrow}{x}\in\lbrack\overset
{\rightarrow}{x}].
\]
This proves part (2).
\end{proof}

\begin{lemma}
Let $T$ be an operator on the K\"{o}the-Bochner space $E(X).$ Suppose $T$
satisfies
\[
\text{(iii) }T(a\langle f,x^{\ast}\rangle\overset{\rightarrow}{x})=a\langle
T(f),x^{\ast}\rangle\overset{\rightarrow}{x}%
\]
for all $a\in L^{\infty}(\mu),f\in F_{\pi}(X),x\in X$ and $x^{\ast}\in
X^{\ast}.$ Then

\textit{(iia) For each }$x\in X,$\textit{ the cyclic subspace }$E(\overset
{\rightarrow}{x})$\textit{ is left invariant by }$T,$\textit{ }

\textit{(iic) }$T$\textit{ commutes with }$L^{\infty}(\mu)$\textit{ on
}$E(X).$
\end{lemma}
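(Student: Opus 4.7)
The plan is to follow the same outline as the proof of Lemma~8, deriving the action of $T$ on each constant vector-valued function from the functional equation (iii), but with two extra ingredients needed to handle the fact that $T$ is defined on the larger space $E(X)$. The first is the boundedness of $T$ as an operator on $E(X)$, which will be used to promote the scalar coefficient $c_x:=\langle T(\overset{\rightarrow}{x}),x^\ast\rangle$ from $E$ to $L^\infty(\mu)$. The second is the Kaplansky module property of $E(X)$ (Lemma~6) combined with the density of $E(X)\cap\sigma$-$S(X)$ in $E(X)$ (Lemma~5), which will propagate commutation with $L^\infty(\mu)$ from $F_\pi(X)$ up to $E(X)$.

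For (iia), fix $x\in X$ and $x^\ast\in X^\ast$ with $\langle x,x^\ast\rangle=1$. Applying (iii) with $f=\overset{\rightarrow}{x}$, which lies in $C(S,X)\subset F_\pi(X)$, and $a\in L^\infty(\mu)$ arbitrary yields $T(a\overset{\rightarrow}{x})=ac_x\overset{\rightarrow}{x}$ with $c_x=\langle T(\overset{\rightarrow}{x}),x^\ast\rangle\in E$. Boundedness of $T$ then gives $\|ac_x\|_E\,\|x\|=\|T(a\overset{\rightarrow}{x})\|_{E(X)}\leq\|T\|\,\|a\|_E\,\|x\|$ for every $a\in L^\infty(\mu)$; testing with $a=\chi_{\{|c_x|>n\}}$ and using $|ac_x|\geq n\,\chi_{\{|c_x|>n\}}$ forces $\{|c_x|>n\}$ to be null once $n>\|T\|$, so $c_x\in L^\infty(\mu)\subset F$. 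Hence $T(L^\infty(\mu)\overset{\rightarrow}{x})\subset L^\infty(\mu)\overset{\rightarrow}{x}\subset E(\overset{\rightarrow}{x})$, and continuity of $T$ together with the density of $L^\infty(\mu)\overset{\rightarrow}{x}$ in $E(\overset{\rightarrow}{x})=F\overset{\rightarrow}{x}$ upgrade this to $T(E(\overset{\rightarrow}{x}))\subset E(\overset{\rightarrow}{x})$, which is (iia).

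For (iic), I first derive commutation on $F_\pi(X)$. Setting $a=1$ in (iii) gives $T(\langle f,x^\ast\rangle\overset{\rightarrow}{x})=\langle T(f),x^\ast\rangle\overset{\rightarrow}{x}$; multiplying this by $a\in L^\infty(\mu)$ and comparing with (iii) itself yields $T(a\langle f,x^\ast\rangle\overset{\rightarrow}{x})=a\,T(\langle f,x^\ast\rangle\overset{\rightarrow}{x})$. Since $\langle f,x^\ast\rangle$ ranges over all of $F$ as $f$ varies in $F_\pi(X)$, $T$ commutes with $L^\infty(\mu)$ on each $E(\overset{\rightarrow}{x})=F\overset{\rightarrow}{x}$, and by linearity and the density of $M(X)$ in $F_\pi(X)$ (Lemma~1) on all of $F_\pi(X)$. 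Combined with (iia), Theorem~3 applied to $T|_{F_\pi(X)}$ furnishes $a_0\in L^\infty(\mu)$ with $T|_{F_\pi(X)}$ equal to multiplication by $a_0$. To extend (iic) to $E(X)$, take $g\in E(X)\cap\sigma$-$S(X)$, write $g=\sum e_i\overset{\rightarrow}{x_i}$ with $\sup_i e_i=1$, and set $\chi(n)=\sum_{i\leq n}e_i$. Then $\chi(n)g\in S(X)\subset F_\pi(X)$, so $T(\chi(n)g)=a_0\chi(n)g$ and $T(a\chi(n)g)=a_0 a\chi(n)g$ for any $a\in L^\infty(\mu)$. Setting $h:=T(ag)-aT(g)\in E(X)$, pairing $\chi(n)h$ against each $y^\ast\in X^\ast$, and using (iii) applied to $f=\chi(n)g\in F_\pi(X)$ together with $T(\chi(n)g)=a_0\chi(n)g$ to identify $\chi(n)\langle T(g),y^\ast\rangle$ as $a_0\chi(n)\langle g,y^\ast\rangle$, one obtains $\chi(n)h=0$. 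The Kaplansky property of $E(X)$ (Lemma~6) together with $\sup_n\chi(n)=1$ then gives $h=0$, and combining with Lemma~5 and continuity of $T$ yields (iic) on all of $E(X)$.

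The hardest step will be the scalar-level identification $\chi(n)\langle T(g),y^\ast\rangle=a_0\chi(n)\langle g,y^\ast\rangle$ in the last step, which links the value of $T$ on the element $g\in E(X)\cap\sigma$-$S(X)$, a priori lying outside $F_\pi(X)$, to the value of $T$ on $\chi(n)g\in F_\pi(X)$ via the scalar pairing with $y^\ast$; this is precisely the place where the specific form of the functional equation (iii) does its crucial work.
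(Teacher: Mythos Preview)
Your proof of (iia) is correct and takes a different route from the paper. The paper first establishes (iic) and then uses Lemma~10 on the band-type submodule $[\overset{\rightarrow}{x}]$ to deduce (iia); you instead read off $c_x=\langle T(\overset{\rightarrow}{x}),x^{\ast}\rangle\in E$ directly from the functional equation and use the operator bound $\|ac_x\|_E\le\|T\|\,\|a\|_E$ (tested on $a=\chi_{\{|c_x|>n\}}$) to force $c_x\in L^{\infty}(\mu)$. This is a clean, self-contained argument that does not rely on (iic).

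Your proof of (iic), however, has a genuine gap at exactly the step you flag as hardest. You want $\chi(n)\langle T(g),y^{\ast}\rangle=a_0\chi(n)\langle g,y^{\ast}\rangle$, but (iii) applied to $f=\chi(n)g\in F_{\pi}(X)$ only yields $\langle T(\chi(n)g),y^{\ast}\rangle$ on the right-hand side, not $\chi(n)\langle T(g),y^{\ast}\rangle$. Identifying the two amounts to $\chi(n)T(g)=T(\chi(n)g)$, which is precisely the commutation you are trying to establish. No instance of (iii) with $f\in F_{\pi}(X)$ gives any information about $T(g)$ for $g\notin F_{\pi}(X)$.

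This gap cannot be repaired: with (iii) assumed only for $f\in F_{\pi}(X)$ the lemma is actually false. Any operator $T=a_0+N$ with $a_0\in L^{\infty}(\mu)$ and $N\ne 0$ vanishing on $F_{\pi}(X)$ (cf.\ Example~13) satisfies (iii) for every $f\in F_{\pi}(X)$, since both sides of (iii) then lie in $F_{\pi}(X)$ where $T$ coincides with $a_0$; yet such a $T$ need not commute with $L^{\infty}(\mu)$ on $E(X)$. The paper's own proof in fact uses (iii) for $f\in E(X)$ (consistent with Corollary~12; the ``$F_{\pi}(X)$'' in the stated hypothesis is a slip), and under that hypothesis (iic) is immediate: applying (iii) once to $f$ and once to $af$ yields $\langle aT(f),x^{\ast}\rangle\overset{\rightarrow}{x}=\langle T(af),x^{\ast}\rangle\overset{\rightarrow}{x}$ for all $x^{\ast}$, hence $aT(f)=T(af)$. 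Your detour through Theorem~3, $\sigma$-simple functions, and the Kaplansky property is then unnecessary.
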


\begin{proof}
We will prove (iic) first. Observe that $a\langle f,x^{\ast}\rangle=\langle
af,x^{\ast}\rangle$ for all $a\in L^{\infty}(\mu),f\in E(X)$ and $x^{\ast}\in
X^{\ast}.$ Then for each $x\in X,$ (iii) implies%
\[
T(a\langle f,x^{\ast}\rangle\overset{\rightarrow}{x})=a\langle T(f),x^{\ast
}\rangle\overset{\rightarrow}{x}=\langle aT(f),x^{\ast}\rangle\overset
{\rightarrow}{x}%
\]
and%
\[
T(a\langle f,x^{\ast}\rangle\overset{\rightarrow}{x})=T(\langle af,x^{\ast
}\rangle\overset{\rightarrow}{x})=\langle T(af),x^{\ast}\rangle\overset
{\rightarrow}{x}%
\]
for all $a\in L^{\infty}(\mu),f\in E(X)$ and $x^{\ast}\in X^{\ast}.$ Then by
Lemma 10 part(1), we have $\langle aT(f),x^{\ast}\rangle=\langle
T(af),x^{\ast}\rangle$ for all $x^{\ast}\in X$ when we take some fixed $a\in
L^{\infty}(\mu),f\in E(X).$ Since the functions in $E(X)$ have separable
range, by an often used result \cite[II.2.7]{DU}, we have $aT(f)=T(af).$ That
is (iii) implies (iic). Next we will show that for each $x\in X,$ the operator
$T$ leaves the band-type submodule $[\overset{\rightarrow}{x}]$ invariant. Let
$\phi\in E.$ Consider the sequence of measurable sets $V_{n}=\{t:|\phi(t)|\leq
n\},$ $n=1,2,\ldots.$ Let $e_{n}$ denote the characteristic function of
$V_{n}.$ Then the sequence $\{e_{n}\}$ is increasing and $\sup e_{n}=1$ in
$L^{\infty}(\mu).$ We have $e_{n}\phi\in L^{\infty}(\mu),$ for each $n.$ Also
take $x^{\ast}\in X^{\ast}$ such that $\langle x,x^{\ast}\rangle=1.$ Then, by
(iic),%
\[
e_{n}T(\phi\overset{\rightarrow}{x})=T(e_{n}\phi\overset{\rightarrow}%
{x})=T(e_{n}\phi\langle\overset{\rightarrow}{x},x^{\ast}\rangle\overset
{\rightarrow}{x})=e_{n}\phi\langle T(\overset{\rightarrow}{x}),x^{\ast}%
\rangle\overset{\rightarrow}{x}.
\]
That is $e_{n}T(\phi\overset{\rightarrow}{x})\in\lbrack\overset{\rightarrow
}{x}],$ for each $n=1,2,\ldots.$ Then Lemma 10 part (2) implies that
$T(\phi\overset{\rightarrow}{x})\in\lbrack\overset{\rightarrow}{x}].$ That is
, $T$ leaves $[\overset{\rightarrow}{x}]$ invariant. Furthermore (iic) implies
$T$ commutes with $L^{\infty}(\mu)$ on $[\overset{\rightarrow}{x}].$ By Lemma
10 part (1), we have $E\cong\lbrack\overset{\rightarrow}{x}].$ Therefore $T$
restricted to $[\overset{\rightarrow}{x}]$ corresponds to an operator on $E$
that commutes with the ideal center $Z(E)=L^{\infty}(\mu).$ Since $E$ is a
Dedekind complete Banach lattice, it is well known that its ideal center is
maximal abelian (e.g., \cite[Proposition 2.1]{W1}). So $T$ is a multiplication
operator on $E$ and therefore on $[\overset{\rightarrow}{x}].$ That is for
some $a_{x}\in L^{\infty}(\mu),$ for all $\phi\in E,$ we have $T(\phi
\overset{\rightarrow}{x})=a_{x}\phi\overset{\rightarrow}{x}.$ When $a\in
L^{\infty}(\mu)\subset E,$ we have $T(a\overset{\rightarrow}{x})=a_{x}%
a\overset{\rightarrow}{x}\in E(\overset{\rightarrow}{x}).$ Therefore $T$
leaves the cyclic subspace $E(\overset{\rightarrow}{x})$ invariant. That is
(iii) implies (iia).
\end{proof}

The use of Lemma 11 and Theorem 7 yield the following corollary to Theorem 7.

\begin{corollary}
Let $T$ be an operator on the K\"{o}the-Bochner space $E(X).$ Then the
following are equivalent:

\textit{(i) }$T$\textit{ is a multiplication operator.}

\textit{(iii) The equality}%
\[
\text{ }T(a\langle f,x^{\ast}\rangle\overset{\rightarrow}{x})=a\langle
T(f),x^{\ast}\rangle\overset{\rightarrow}{x}%
\]
\textit{holds for all }$a\in L^{\infty}(\mu),f\in E(X),x\in X$\textit{ and
}$x^{\ast}\in X^{\ast}.$
\end{corollary}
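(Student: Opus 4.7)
The plan is to observe that Corollary 12 is essentially a packaging of Lemma 11 together with Theorem 7, plus a routine check of the easy implication. So there are two directions to handle, and most of the genuine content has already been proved earlier in the paper.

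For the forward direction (i) $\Rightarrow$ (iii), I would just substitute. If $T(g) = bg$ for some fixed $b \in L^{\infty}(\mu)$ and all $g \in E(X)$, then the left-hand side of (iii) becomes $b \cdot a \langle f, x^{\ast}\rangle \vec{x}$, while the right-hand side becomes $a \langle bf, x^{\ast}\rangle \vec{x}$. Since $\langle bf, x^{\ast}\rangle = b\langle f, x^{\ast}\rangle$ (which follows pointwise a.e.\ from $x^{\ast}$ being linear on $X$), the two expressions agree as elements of $[\vec{x}] \subset E(X)$. This is a one-line computation and presents no obstacle.

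For the reverse direction (iii) $\Rightarrow$ (i), the strategy is simply to chain Lemma 11 with Theorem 7. Lemma 11 tells us that the functional equation (iii) implies both (iia), invariance of each cyclic subspace $E(\vec{x})$, and (iic), that $T$ commutes with all of $L^{\infty}(\mu)$ on $E(X)$. Then Theorem 7 tells us that (iia) and (iic) together are equivalent to $T$ being a multiplication operator. So the conclusion (i) follows immediately.

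Because the substantive work is all absorbed into Lemma 11 (which handles the passage from the functional equation to the abstract conditions, including the argument using band-type submodules $[\vec{x}]$ and the Kaplansky module structure of $E(X)$) and into Theorem 7 (which does the reconstruction of the multiplier, using Lemma 5 and the reduction to $F_{\pi}(X)$), the proof of the corollary itself should be only a few lines. If there were any obstacle to watch for, it would be making sure that both expressions in (iii) are genuinely elements of the same space so the equation is meaningful --- but this is already built into the interpretation of $\langle \cdot, x^{\ast}\rangle \vec{x}$ established by Lemma 10(1), so nothing further needs to be verified here.
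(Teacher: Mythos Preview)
Your proposal is correct and matches the paper's approach exactly: the paper states only that Lemma 11 and Theorem 7 together yield the corollary, which is precisely your chaining argument for (iii) $\Rightarrow$ (i), and your one-line verification of (i) $\Rightarrow$ (iii) fills in the routine direction the paper leaves implicit.
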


\section{Examples, Comlementary Results and Remarks}

Throught the section $E$ will be a Banach function space in $L^{1}(\mu)$ with
weak order unit $1$. $F$ will be the Banach function space given by the closed
ideal generated by $1$ in $E.$ We will think of $F_{\pi}(X)$ as the closure of
$S(X)$ in the K\"{o}the-Bochner space $E(X).$ For K\"{o}the-Bochner spaces, in
general, the conditions of Theorem 3 (i.e.,  (iia) and (iib)) are not sufficient
to identify the multiplication operators. We give below examples of operators
on $E(X)$ that satisfy the conditions (iia) and (iib) of Theorem 3 but are not
multiplication operators.

\begin{example}
\emph{{ Suppose that $F_{\pi}(X)$ is a proper closed subspace of $E(X).$ Let
$\mathcal{N}$ denote the non-zero operators on $E(X)$ that are zero on
$F_{\pi}(X).$ The set $\mathcal{N}$ is non-empty (via the Hahn-Banach
Theorem). Take $T\in\mathcal{N}$ and $a\in L^{\infty}(\mu)$ considered as a
multiplication operator on $E(X).$ Consider the operator $T+a$ on $E(X).$ When
restricted to $F_{\pi}(X)$ we have that $T+a=a.$ So $T+a$ is a multiplication
operator on $F_{\pi}(X)$ and leaves it invariant. Hence, by Theorem 3, $T+a$
satisfies the conditions (iia) and (iib) on $F_{\pi}(X).$ Recall, from the
proof of Theorem 7, that the cyclic subspace $E(\overset{\rightarrow}%
{x})=F(\overset{\rightarrow}{x})\subset F_{\pi}(X)$ for each $x\in X.$ That is
$T+a$ satisfies the conditions (iia) and (iib) on $E(X)$ but it is not a
multiplication operator on }}$E(X)$\emph{{. So all operators in the class
$\mathcal{N}+L^{\infty}(\mu)$ satisfy the conditions (iia) and (iib) on $E(X)$
but are not multiplication operators. When $X$ is infinite dimensional with
$E=L^{\infty}(\mu)=C(S)=F,$ we have that $F_{\pi}(X)=C(S,X)$ , $E(X)=L^{\infty
}(\mu,X)$ and $C(S,X)$ is a proper closed subspace of $L^{\infty}(\mu,X).$
Other examples are obtained when $F$ is a proper ideal of $E$ because in all
such cases $F_{\pi}(X)$ is a proper closed subspace of $E(X).$ See Remark 17
below for even more examples on which this type of operators may be
constructed. }}
\end{example}

Next we verify a statement made in Section 3 on K\"{o}the-Bochner spaces while
comparing the spaces $F(X)$ and $F_{\pi}(X).$ Initially we need to prove the
following result on K\"{o}the-Bochner spaces.

\begin{theorem}
Let $E$ be a Banach function space and let $X$ be an infinite dimensional
Banach space. Then $S(X)$ is dense in $E(X)$ if and only if $E$ has order
continuous norm.
\end{theorem}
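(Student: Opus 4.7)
The plan is to prove the two implications separately. For the sufficiency---order continuity of $E$ implies density of $S(X)$---I chain Lemma~4 with a truncation argument. Given $f \in E(X)$ and $\varepsilon > 0$, Lemma~4 produces $g = \sum_{i=1}^{\infty} e_i \overset{\rightarrow}{x_i} \in \sigma$-$S(X)$ with $\|f - g\|_\infty < \varepsilon$; since $1 \in E$ with $\|1\|_E = 1$ the embedding $L^\infty(\mu,X) \hookrightarrow E(X)$ is contractive, so $\|f - g\|_{E(X)} \leq \varepsilon$. The partial sums $g_n = \sum_{i=1}^{n} e_i \overset{\rightarrow}{x_i} \in S(X)$ satisfy $|g - g_n|_X = \sum_{i > n} e_i \|x_i\| \downarrow 0$ pointwise, order-bounded by $|g|_X \in E$; order continuity of the norm on $E$ gives $\|g - g_n\|_{E(X)} \to 0$, placing $f$ in the closure of $S(X)$.

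For the converse I argue by contrapositive: assuming $E$ fails order continuity, I construct a function in $E(X)$ bounded away from $S(X)$. Since $E$ is Dedekind complete (as an ideal in $L^1(\mu)$), the failure of order continuity produces a sequence $\psi_n \downarrow 0$ in $E_+$ with $\|\psi_n\|_E \geq \delta > 0$. Setting $\phi = \psi_1$ and $A_n = \{\psi_n > 0\}$, the sets $A_n$ decrease to a null set, and $\phi \chi_{A_n} \geq \psi_n$ forces $\|\phi \chi_{A_n}\|_E \geq \delta$ for every $n$. Infinite dimensionality of $X$ and an iterated application of Riesz's lemma supply $\{x_n\} \subset X$ with $\|x_n\| = 1$ and $\|x_n - x_m\| \geq 1/2$ whenever $n \neq m$. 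Define $g : \Omega \to X$ by $g(t) = \phi(t) x_n$ for $t \in B_n := A_n \setminus A_{n+1}$ and $g(t) = 0$ elsewhere; then $g$ is strongly measurable with $|g|_X \leq \phi \in E$, so $g \in E(X)$.

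The core estimate is that $\|g - h\|_{E(X)} \geq \delta/10$ for every $h = \sum_{j=1}^{N} \chi_{C_j} \overset{\rightarrow}{y_j} \in S(X)$ (disjoint $C_j$ covering $\Omega$). The geometric observation driving everything is that if $\|\phi(t) x_n - y_j\| < \phi(t)/10$, then the reverse triangle inequality yields $\bigl|\phi(t) - \|y_j\|\bigr| < \phi(t)/10$, while dividing the original inequality by $\phi(t)$ gives $\bigl\|x_n - y_j/\phi(t)\bigr\| < 1/10$; combining these forces $\bigl\|x_n - y_j/\|y_j\|\bigr\| < 2/10 = 1/5$. Because $\|x_n - x_m\| \geq 1/2 > 2/5$, at most one index $n(j)$ can lie within $1/5$ of the direction $y_j/\|y_j\|$, and crucially $n(j)$ depends only on $j$, not on $t$. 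Hence $|g - h|_X \geq \phi/10$ on $B_n \cap C_j$ whenever $n \neq n(j)$ (and on all of $B_n \cap C_j$ if $y_j = 0$ or no such $n(j)$ exists). The residual set therefore lies in $\bigcup_j (B_{n(j)} \cap C_j) \subseteq A_1 \setminus A_{M+1}$ where $M = \max_j n(j) < \infty$, giving $|g - h|_X \geq (\phi/10) \chi_{A_{M+1}}$ and $\|g - h\|_{E(X)} \geq (1/10)\|\phi \chi_{A_{M+1}}\|_E \geq \delta/10$.

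The main obstacle is establishing the direction-only dependence of $n(j)$, which is what decouples the geometric estimate from the pointwise variation of $\phi$ on $B_n$ and reduces the residual set to a finite union of $B_n$'s independent of $\phi$; once that is in hand, the failure of order continuity supplies the uniform lower bound on $\|\phi \chi_{A_{M+1}}\|_E$ and closes the argument.
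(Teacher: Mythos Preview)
Your forward implication is fine and in fact more explicit than the paper, which simply cites the standard result.

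The reverse implication has a genuine gap. From $\psi_n\downarrow 0$ in $E_+$ with $\|\psi_n\|_E\ge\delta$ it does \emph{not} follow that the supports $A_n=\{\psi_n>0\}$ shrink to a null set. In $E=L^\infty[0,1]$, take $\psi_n=\chi_{[0,1/n]}+\tfrac{1}{n}\chi_{(1/n,1]}$: then $\psi_n\downarrow 0$ pointwise, $\|\psi_n\|_\infty=1$, yet $A_n=[0,1]$ for every $n$, so $\bigcap_n A_n=[0,1]$. When $\bigcap_n A_n$ has positive measure, every $B_n=A_n\setminus A_{n+1}$ may be empty (or only finitely many nonempty), your function $g$ vanishes on $\bigcap_n A_n$ even though $\phi>0$ there, and the final inequality $|g-h|_X\ge(\phi/10)\chi_{A_{M+1}}$ fails on that set: with $h=0$ you get $|g-h|_X=0$ on $\bigcap_n A_n\subset A_{M+1}$, so no lower bound survives. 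Your entire ``direction'' estimate, clever as it is, is vacuous once the $B_n$ collapse.

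The paper avoids this by never passing through an arbitrary decreasing sequence. It first disposes of the case where $1$ is not a quasi-interior point (then the closure of $S(X)$ sits inside $F_\pi(X)\subsetneq E(X)$), and in the remaining case uses that failure of order continuity produces an increasing sequence of idempotents $\chi_n\uparrow 1$ that is not norm Cauchy; from this one extracts pairwise disjoint idempotents $e_i$ with $\|e_i\|_E\ge\delta$, and builds $f=\sum e_i\overset{\rightarrow}{x_i}$ directly. Because the pieces are idempotents rather than weighted by a varying $\phi$, the distance estimate becomes a pigeonhole: each $x_i$ must lie within $\varepsilon$ of some $y_k$, and with infinitely many $x_i$ and finitely many $y_k$ this forces two $x_i$'s within $2\varepsilon$ of each other. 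Your argument can be repaired along the same lines---start from a disjoint order-bounded sequence with norms bounded below (equivalent to failure of order continuity in a $\sigma$-Dedekind complete lattice) rather than from an arbitrary $\psi_n\downarrow 0$---but as written the support claim is false and the construction collapses.
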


\begin{proof}
It is well known that if $E$ has order continuous norm then $S(X)$ is dense in
$E(X)$ \cite{CRS}, \cite{P}. To complete the proof we will show that if $E$
does not have order continuous norm then $S(X)$ is not dense in $E(X).$ If $1$
is not a quasi-interior point of $E$ then there is nothing to prove. Since in
that case the closure of $S(X)$ in $E(X)$ equals $F_{\pi}(X)$ (with $F=I(1)$)
and $F_{\pi}(X)$ is a proper subspace of $E(X).$ Therefore let $1$ be a
quasi-interior point of $E$ and suppose $E$ does not have order continuous
norm. Then there is an increasing squence $\{\chi_{n}\}$ of idempotents in
$L^{\infty}(\mu)$ such that $\sup\chi_{n}=1$ but $\{\chi_{n}\}$ does not
converge to $1$ in norm in $E.$ This means that $\{\chi_{n}\}$ does not
converge in $E$ because the only limit it can have is $1.$ So $\{\chi_{n}\}$
is not a Cauchy sequence. Then for some $\delta>0$ we can choose a subsequence
$\{\chi_{k_{n}}\}$ such that $\delta\leq||\chi_{k_{n}}-\chi_{k_{m}}||_{E}$
when $n\neq m$ and $\sup\chi_{k_{n}}=1.$ Define a disjoint sequence of
idempotents by setting $e_{1}=\chi_{k_{2}},$ and $e_{i}=\chi_{k_{i+1}}%
-\chi_{k_{i}}$ for all $i\geq2.$ Clearly $\delta\leq||e_{i}||_{E}$ for all $i$
and $\sup\underset{i=1}{\overset{n}{%
{\textstyle\sum}
}}e_{i}=1.$ Since $X$ is infinite dimensional we can choose a distinct
sequence $\{x_{i}\}$ on the unit sphere of $X$ such that $||x_{i}-x_{j}||>1/2$
when $i\neq j.$ Now define a function $f\in L^{\infty}(\mu,X)$ by setting
$f=\sum e_{i}\overset{\rightarrow}{x}_{i}.$ Clearly $f\in E(X)$ and we will
show $f$ is not in the closure of $S(X).$ Given any $\varepsilon>0$ suppose
there is $g\in S(X)$ such that $||g-f||_{E(X)}<\varepsilon\delta.$ There is a
finite distinct subset $\{y_{k}:1\leq k\leq n\}$ in $X$ and a disjoint set of
idempotents $\{\gamma_{k}:1\leq k\leq n\}$ in $L^{\infty}(\mu)$ with $%
{\textstyle\sum}
\gamma_{k}=1$ such that $g=\sum\gamma_{k}\overset{\rightarrow}{y}_{k}.$ For
each $i$, we have
\[
e_{i}(g-f)=e_{i}(g-\overset{\rightarrow}{x}_{i})=e_{i}(\overset{n}%
{\underset{k=1}{%
{\textstyle\sum}
}}\gamma_{k}(\overset{\rightarrow}{y}_{k}-\overset{\rightarrow}{x}_{i})).
\]
Then%
\[
\underset{1\leq k\leq n}{\min}||y_{k}-x_{i}||\delta\leq\underset{1\leq k\leq
n}{\min}||y_{k}-x_{i}||\,||e_{i}||_{E}\leq||e_{i}(g-f)||_{E(X)}\leq
||g-f||_{E(X)}<\varepsilon\delta.
\]
Therefore for any $\varepsilon>0,$ and for each $i$ there exists some $k,$
$1\leq k\leq n$ such that $||y_{k}-x_{i}||<\varepsilon.$ But there are only
finitely many $y_{k}$, therefore, for some $y_{k},$ there must be a pair of
indices $i\neq j$ such that both $||y_{k}-x_{i}||<\varepsilon$ and
$||y_{k}-x_{j}||<\varepsilon$ hold. This implies $||x_{i}-x_{j}||<2\varepsilon
.$ When $\varepsilon\leq1/4,$ this contradicts our initial choice of the
sequence $\{x_{i}\}.$ Hence $f$ is not in the closure of $S(X)$ and the proof
is complete.
\end{proof}

\begin{remark}
It is clear from Theorem 14 that when $X$ is infinite dimensional, $F_{\pi
}(X)=F(X)$ if and only if $F$ has order continuous norm. The theorem also
shows that the main result of \cite[Theorem 1.4]{CRS} and its corollary
\cite[Corollary 2.3]{CRS} are equivalent.
\end{remark}

To verify a second statement made in Section 3 on K\"{o}the-Bochner spaces
while comparing the spaces $F(X)$ and $F_{\pi}(X),$ we need to prove the
following result.

\begin{theorem}
Let $E$ be a Banach function space and let $X$ be a Banach space. Suppose $1$
is a quasi-interior point of $E,$ then $L^{\infty}(\mu,X)$ is dense in $E(X).$
\end{theorem}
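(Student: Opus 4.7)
The plan is to approximate $f \in E(X)$ by rescaling it with bounded scalar factors that closely track $|f|_X$ in $E$-norm. The central observation is that the hypothesis on $1$ is precisely what guarantees that scalar-valued functions in $E$ can be approximated by bounded ones.

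First, I would set $\phi = |f|_X \in E_+$. Because $1$ is a quasi-interior point of $E$, the ideal $L^{\infty}(\mu) \cdot 1 = L^{\infty}(\mu)$ is dense in $E$, so there is a sequence $a_n \in L^{\infty}(\mu)$ with $\|\phi - a_n\|_E \to 0$. Replacing $a_n$ by $|a_n|$ (which lies in $L^{\infty}(\mu)$ and is no farther from $\phi$ in $E$-norm), I may assume $a_n \ge 0$. To obtain a bounded approximant that is dominated by $\phi$, I would set $b_n = a_n \wedge \phi \in L^{\infty}(\mu)$; then $0 \le b_n \le \phi$, and the lattice identity $\phi - b_n = \phi - (a_n \wedge \phi) = (\phi - a_n)^+ \le |\phi - a_n|$ gives $\|\phi - b_n\|_E \le \|\phi - a_n\|_E \to 0$.

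Next I would define $h_n \in L(\mu,X)$ by $h_n(t) = (b_n(t)/\phi(t))\, f(t)$ on $\{\phi > 0\}$ and $h_n(t) = 0$ on $\{\phi = 0\}$. Since $b_n/\phi$ is a scalar measurable function bounded by $1$ on $\{\phi > 0\}$, multiplication with the strongly measurable $f$ yields a strongly measurable $h_n$. A pointwise computation gives $|h_n|_X = b_n \in L^{\infty}(\mu)$, so $h_n \in L^{\infty}(\mu,X)$. Because $0 \le b_n \le \phi$, we also have $|f - h_n|_X(t) = |1 - b_n(t)/\phi(t)|\,\phi(t) = \phi(t) - b_n(t)$ on $\{\phi > 0\}$ (and $0$ elsewhere), hence $|f - h_n|_X = \phi - b_n$ and
\[
\|f - h_n\|_{E(X)} = \|\phi - b_n\|_E \to 0.
\]

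The argument is essentially mechanical once the right rescaling is identified; the only real obstacle is recognizing that the quasi-interior hypothesis must be used to approximate $\phi$ by bounded scalars \emph{dominated by} $\phi$, rather than by truncations such as $\phi \wedge n$, for which convergence in $E$-norm would require order continuity (which is not assumed here, as is made clear in Theorem 14). The rescaling $(b_n/\phi)\,f$ then promotes this scalar approximation into an $L^{\infty}(\mu,X)$ approximation of $f$ whose error function in $E$ is exactly $\phi - b_n$.
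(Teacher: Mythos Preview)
Your argument is correct and in fact very close to the paper's own: both proofs rescale $f$ by a scalar factor $b/\phi$ with $0\le b\le\phi=|f|_X$ and $b\in L^\infty(\mu)$, so that the approximant has norm function $b$ and the error has norm function $\phi-b$. The paper first reduces to $g\in E(X)\cap\sigma\text{-}S(X)$ via Lemma~5 and then takes the explicit truncation $b=|g|_X\wedge n\cdot 1$, whereas you work directly with an arbitrary $f\in E(X)$ and manufacture $b_n=|a_n|\wedge\phi$ from a generic $L^\infty$ approximant $a_n$ of $\phi$. Your route is slightly more streamlined in that it bypasses the $\sigma$-simple reduction.

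However, your closing commentary contains a misconception that you should correct. The truncations $\phi\wedge n\cdot 1$ \emph{do} converge to $\phi$ in $E$-norm under the sole hypothesis that $1$ is a quasi-interior point; order continuity is not needed. Indeed, given $\varepsilon>0$ choose $a\in L^\infty(\mu)_+$ with $\|\phi-a\|_E<\varepsilon$ and $\|a\|_\infty\le N$; then for $n\ge N$ one has $0\le\phi-\phi\wedge n\cdot 1=(\phi-n\cdot 1)^+\le(\phi-a)^+\le|\phi-a|$, hence $\|\phi-\phi\wedge n\cdot 1\|_E<\varepsilon$. This is precisely the fact the paper invokes, and it shows that your detour through general $a_n$ is unnecessary: you could simply take $b_n=\phi\wedge n\cdot 1$ from the start. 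The phenomenon in Theorem~14 (a sequence $\chi_n\uparrow 1$ failing to converge in norm) concerns order continuity of the \emph{norm}, which is a different matter from convergence of $\phi\wedge n\cdot 1$ to $\phi$ for a fixed $\phi$.
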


\begin{proof}
Let $g\in E(X)\cap\sigma$-$S(X).$ Suppose $g=\sum e_{i}\overset{\rightarrow
}{x_{i}}$ where $\{e_{i}\}$ is a sequence of disjoint idempotents in
$L^{\infty}(\mu)$ and $\{x_{i}\}$ is a sequence of distinct elements in $X.$
Let $\mathcal{I}(n)=\{i:||x_{i}||\leq n\}$ and let the sequence $\{g_{n}\}$ in
$L^{\infty}(\mu,X)$ be defined by
\[
g_{n}=\underset{i\in\mathcal{I}(n)}{\sum}e_{i}\overset{\rightarrow}{x_{i}%
}+\underset{i\notin\mathcal{I}(n)}{\sum}e_{i}(\frac{n}{||x_{i}||}%
\overset{\rightarrow}{x_{i}})
\]
for each $n=1,2\ldots.$ Note that
\begin{align*}
|g_{n}|_{X}  &  =|g|_{X}\wedge n1\text{ \ \ and}\\
|g-g_{n}|_{X}  &  =|g|_{X}-|g_{n}|_{X}\geq0
\end{align*}
for each $n=1,2\ldots.$ Since $1$ is a quasi-interior point in $E,$
$\{|g|_{X}\wedge n1\}$ converges to $|g|_{X}$ in $F.$ Then
\[
||g-g_{n}||_{E(X)}=||\text{ }|g-g_{n}|_{X}||_{E}=||\text{ }|g|_{X}-|g_{n}%
|_{X}||_{E}%
\]
implies that $\{g_{n}\}$ converges to $g$ in $E(X).$ Hence, by Lemma 5,
$L^{\infty}(\mu,X)$ is dense in $E(X).$
\end{proof}

\begin{remark}
Let $E$ be a Banach function space and let $F=I(1).$ Then , since $1$ is a
quasi-interior point of $F,$ by Theorem 16, we have $F_{\pi}(X)=F(X)$ if and
only if $L^{\infty}(\mu,X)\subset F_{\pi}(X).$ Note that in such a case, if
$X$ is infinite dimensional then Theorem 14 implies $F$ has order continuos
norm. Theorem 16 has certain simplifying implications when one looks at
examples. Namely suppose $\phi\in E$ such that $||\phi||_{E}=1$ and for some
$\varepsilon>0,$ $\varepsilon\leq\phi.$ Let $G=I(\phi)$ in $E.$ Since $G$ is a
Banach function space and $\phi$ is a quasi-interior in $G,$ we may consider
the spaces $G_{\pi}(X)$ and $G(X).$ However to do this we need to change the
functional representation of $G$. (So that we have $\phi$ correspond to $1.$)
On the other hand, by Lemma 2, one can see that for each $x\in X,$ the cyclic
subspace $G(\overset{\rightarrow}{x})$ in $G(X)$ may be identified with the
cyclic subspace $E(\phi\overset{\rightarrow}{x})$ in $E(X).$ In fact they are
both lattice isometric to $G.$ Then it follows that $G_{\pi}(X)\cong cl(\phi
C(S,X))$ in $E(X)$. Similarly if $f\in L^{\infty}(\mu,X),$ then the
definitions of the norms immediately give $||f||_{G(X)}=||\phi f||_{E(X)}.$
Since Theorem 16 gives that $G(X)$ is the closure of $L^{\infty}(\mu,X)$ in
the $G(X)$-norm, we have $G(X)\cong cl(\phi L^{\infty}(\mu,X))$ in $E(X).$ So
we may realize the spaces $G_{\pi}(X),$ $G(X)$ as submodules of $E(X)$ without
the need to change the functional representation of $G.$ In particular let $E$
be an Orlicz space whose Orlicz function does not satisfy the $\Delta_{2}%
$-condition, or let $E$ be a Marcinkiewicz space. Then, it is well known
\cite{KR} that $E$ does not have order continuous norm, and that $F=I(1)$ has
order continuous norm. However since $E$ does not have order continuous norm,
there must be $\phi\in E$ such that $G=I(\phi)$ does not have order continuous
norm. We may suppose without loss of generality that $||\phi||_{E}=1$ and for
some $\varepsilon>0,$ $\varepsilon\leq\phi$. Then the above discussion implies
that $G_{\pi}(X)$ is a proper subspace of $G(X)$ and this provides yet another
collection of K\"{o}the-Bochner spaces on which the operators of Example 13
may be constructed.
\end{remark}

In conclusion, we return to the discussion in the introduction and compare
Theorems 3 and 7 with the general result about multiplication operators on
Banach $C(K)$-modules (\cite[Theorem 6.2]{AAK}, \cite[Theorem 7]{HO}).

\begin{remark}
Let $M$ be a Banach $C(K)$-module and suppose that the set of multiplication
operators (i.e., multiplication by an element of $C(K)$) is closed in the
weak-operator topology in $\mathcal{L}(M),$ the set of (bounded) operators on
$M.$ Then we know that $T\in\mathcal{L}(M)$ is a multiplication operator if
and only if $T$ leaves invariant each cyclic subspace $M(x)$ for every $x\in
M.$ In the case of K\"{o}the-Bochner spaces as Banach $L^{\infty}(\mu
)$-modules we saw that it is sufficient for $T$ to satisfy the functional
equation in order to be a multiplication operator (Corollary 12). We also saw
in Theorems 3 and 7 that the conceptual basis of the functional equation
requires the operator to leave invariant a very restricted class of cyclic
subspaces, namely those given by the constant functions. This is compensated
by the commutativity conditions (iib) or (iic). However that the class of
multiplication operators should be weak-operator closed in $\mathcal{L}(E(X))$
is not mentioned because the constructions of the spaces $E(X)$ and $F_{\pi
}(X)$ guarantee that the set of multiplication operators is weak-operator
closed in both cases. If one takes subalgebras of $L^{\infty}(\mu)$ then the situation
changes. Let $E=L^{1}[0,1]$ with Lebesgue measure on $[0,1].$ Take the
subalgebra $C[0,1]$ of $L^{\infty}[0,1],$ the center of $E.$ Consider an
operator $T$ on $E(X)=L^{1}([0,1],X)$ such that, for each $x\in X$, $T$ leaves
$E(\overset{\rightarrow}{x})$ invariant and commutes with $C[0,1]$ on
$E(\overset{\rightarrow}{x}).$ Then $T\in L^{\infty}[0,1]$ (and not in
$C[0,1]$ in general). This works because $E$ has order continuous norm. Now
let $L^{\infty}[0,1]=C(S)$ where $S$ is hyperstonian and consider $C(S,X).$
Suppose $T$ is an operator on $C(S,X)$ that leaves the cyclic subspaces of
$C(S,X)$ generated by constant functions invariant and commutes with $C[0,1]$
on these cyclic subspaces. Then $T$ need not be a multiplication operator.
Initially note that, Lemma 2 implies that the cyclic subspaces of $C(S,X)$
generated by the constant functions are identified with $C(S)=L^{\infty
}[0,1].$ Then note that, a remarkable and deep result of de Pagter and Ricker
\cite{PR} states that the bicommutant of the mutiplication operators given by
$C[0,1]$ in $\mathcal{L}(L^{\infty}[0,1])$ consists of the bounded Riemann
integrable functions on $[0,1].$ So that an operator $A$ in $\mathcal{L}%
(L^{\infty}[0,1])$ that commutes with $C[0,1]$ is not in general a
multiplication operator in $L^{\infty}[0,1].$ However, using the methods in
the proof of Theorem 3, one can show that there is a single such operator
$A_{T}$ such that on $M(X)$ one has that
\[
T(%
{\textstyle\sum}
a_{i}\overset{\rightarrow}{x}_{i})=%
{\textstyle\sum}
A_{T}(a_{i})\overset{\rightarrow}{x}_{i}%
\]
for all $a_{i}\in L^{\infty}[0,1]$ and $x_{i}\in X$ with $i=1,2\ldots,n$ for
any $n.$ Then the operator is extended to all of $C(S,X)$ by density and continuity.
\end{remark}

\end{document}